\documentclass[a4paper,11pt]{article}

\usepackage{amsmath}
\usepackage{amsthm}
\usepackage{amssymb}
\usepackage{amsfonts}
\usepackage{mathrsfs}
\usepackage{here}
\usepackage{graphicx}
\usepackage{xcolor}
\usepackage{latexsym}
\usepackage{enumerate}
\usepackage[T1]{fontenc} 
\usepackage{authblk} 

\definecolor{mypink}{rgb}{0.9, 0.0, 0.4}
\definecolor{mygreen}{rgb}{0.0,0.5,0.0}
\definecolor{mypurple}{rgb}{0.6,0.0,0.6}
\definecolor{myfuchsia}{rgb}{0.9,0.0,0.9}
\definecolor{mybrown}{rgb}{0.7,0.3,0.3}
\definecolor{mygray}{rgb}{0.6,0.6,0.6}
\definecolor{myltgray}{rgb}{0.85,0.85,0.85}

\newcommand{\upd}[1]{#1} 

\theoremstyle{definition}
\newtheorem{definition}{Definition}[section]
\newtheorem{theorem}[definition]{Theorem}
\newtheorem{proposition}[definition]{Proposition}
\newtheorem{corollary}[definition]{Corollary}
\newtheorem{lemma}[definition]{Lemma}

\newtheorem{remark}[definition]{Remark}

\title{A characterization of terminal planar networks by forbidden \upd{structures}
\thanks{\upd{This work is based on the Master's and Bachelor's theses of Haruki Miyaji and Yuki Noguchi, respectively, supervised by Momoko Hayamizu.}}}

\author[1]{Haruki Miyaji}
\author[2]{Yuki Noguchi}
\author[1]{Hexuan Liu}
\author[1]{Takatora Suzuki}
\author[1]{Keita Watanabe}
\author[3]{Taoyang Wu}
\author[4]{Momoko Hayamizu
\thanks{Corresponding author: \texttt{hayamizu@waseda.jp}}}

\affil[1]{Department of Pure and Applied Mathematics, Graduate School of Fundamental Science and Engineering, Waseda University, Tokyo, Japan}
\affil[2]{Department of Applied Mathematics, School of Fundamental Science and Engineering, Waseda University, Tokyo, Japan}
\affil[3]{School of Computing Science, University of East Anglia, Norwich, UK}
\affil[4]{Department of Applied Mathematics, Faculty of Science and Engineering, Waseda University, Tokyo, Japan}

\date{\vspace{-3em}} 

\begin{document}
\maketitle 
\begin{abstract}
The class of terminal planar networks was recently introduced from a biological perspective \upd{in relation to the visualization of phylogenetic networks}, and its connection to upward planar networks has been established. 
We provide a Kuratowski-type theorem that characterizes terminal planar networks by a finite set of forbidden structures, defined via six families of 0/1-labeled graphs. Another characterization \upd{based on planarity of supergraphs} yields linear-time algorithms for testing terminal planarity and for computing such planar drawings. 
We describe an application that is potentially relevant in broader, \upd{non-phylogenetic} settings. We also discuss a connection of our main result to an open problem on the forbidden structures of single-source upward planar networks. 
\end{abstract}

\section{Introduction}
\upd{
Finding the set of forbidden structures that characterizes a graph class is a fundamental topic in graph theory. The well-known Kuratowski's theorem \cite{Kuratowski1930} states that a graph is planar if and only if it contains no subgraph homeomorphic to $K_5$ or $K_{3,3}$. This type of characterization is known for different graph classes (e.g., \cite{AIHPB_1967__3_4_433_0,Wagner1937,d60053e7-18fc-319e-8b97-0078fea6e467}; see Chapter 7 of \cite{doi:10.1137/1.9780898719796} for details). However, such forbidden structures are unknown for several important graph classes, including upward planar networks and terminal planar networks.
}

\upd{
Upward planar digraphs are a classical and well-studied class in graph drawing (e.g., \cite{HL1996upss,bbmt-oupsst-1998,DIBATTISTA1988175, garg1995upward}). A digraph is upward planar if it admits a planar drawing in which all edges are monotone upward with respect to the vertical direction. For embedded digraphs with a single source, upward planarity has been characterized by Thomassen~\cite{Thomassen1989PlanarAcyclicOriented} in terms of certain forbidden patterns, and this characterization is used in the algorithms in \cite{HL1996upss} (see Theorem 6.13 in Section 6.7 of \cite{BETT1998GraphDrawing} for details). Despite this understanding of the embedded case, neither forbidden subgraphs nor forbidden minors are currently known for upward planar digraphs.
}

\upd{
Terminal planar networks were more recently introduced by Moulton--Wu~\cite{WuMoulton9804871} as a subclass of directed phylogenetic networks motivated by the visualization of evolutionary relationships among organisms. 
A directed phylogenetic network is an acyclic digraph with a unique root and one or more leaves, and it is terminal planar if it admits a planar embedding such that all terminals (the root and leaves) lie on the outer face. In \cite{WuMoulton9804871}, it was shown that a directed phylogenetic network is terminal planar if and only if a certain single-source supergraph of it---which we call the $t$-completion in this paper---is upward planar, and linear-time algorithms for terminal planarity testing and drawing were described. However, the characterization based on upward planarity of $t$-completion does not easily yield a Kuratowski-type characterization of terminal planar networks, since the forbidden structures of upward planar networks are not fully understood, as mentioned above. Another limitation of this approach is that it does not apply to undirected graphs, because of the inherently directed nature of upward planarity.
}

\upd{
In this paper, we provide a characterization of terminal planar networks in terms of a finite set of forbidden structures, called $\mathcal{H}_1, \dots, \mathcal{H}_6$ structures (Theorem \ref{thm:main}), which is valid for undirected graphs as well (Corollary  \ref{cor:main_underlying}).  This is indeed a Kuratowski-type theorem because $\mathcal{H}_1$ and $\mathcal{H}_4$ structures are nothing but subdivisions of $K_{3,3}$ and $K_5$, respectively. When we only consider binary phylogenetic networks,  $\mathcal{H}_1, \mathcal{H}_2,$ and $\mathcal{H}_3$ structures are the only possible obstructions (Corollaries \ref{cor:main_binary} and \ref{cor:main_binary_underlying}).  
}

\upd{
To prove the main theorem, we first show the equivalence among the terminal planarity of directed phylogenetic networks and the planarity of their supergraphs called the $st$-completion and the terminal cut-completion (Theorem~\ref{lem:N_completion}). 
This characterization plays an important role in proving the main theorem and also furnishes new linear-time algorithms for testing terminal planarity and for drawing such networks since it links terminal planarity with planarity, not with upward planarity. 
}

\upd{
The remainder of the paper is organized as follows. Section~\ref{sec:preliminaries} introduces the basic definitions and notation of graph theory,  and Section~\ref{sec:known_results} reviews the concepts and known results on graph planarity, including upward planarity and terminal planarity of phylogenetic networks. In Section~\ref{sec:characterization.transformation}, in addition to proving Theorem~\ref{lem:N_completion}, we also briefly mention the relationship between terminal planar directed phylogenetic networks and bar-visibility digraphs. 
In Section~\ref{sec:characterization.forbidden}, after setting up the necessary terminology, we prove Theorem~\ref{thm:main} and also describe linear-time algorithms for terminal planarity testing and for drawing such networks. 
In Section~\ref{sec:application}, we demonstrate an application of the main result in more general, non-phylogenetic settings by providing a characterization of planar graphs with an embedding such that specified vertices lie on the outer face (Theorem~\ref{thm:general}).
Finally, in Section~\ref{sec:conclusion_open_problems}, we give a conclusion and briefly discuss how our results may be relevant to the open problem of determining the forbidden structures for upward planar networks.
}

\section{\upd{Basic definitions and notation}\label{sec:preliminaries}}
\subsection{Undirected graphs}
An \emph{undirected graph} is defined by an ordered pair $(V, E)$ consisting of a set $V$ of vertices and a set $E$ of undirected edges. Given an undirected graph $G$, $V(G)$ and $E(G)$ denote the vertex set and edge set of $G$, respectively. A graph $G$ is \emph{finite} if both $V(G)$ and $E(G)$ are finite sets. Given an edge $\{u,v\}$ of an undirected graph, the vertices $u$ and $v$ are called the \emph{endpoints} of the edge.  An edge $\{u,v\}$ with $u = v$ is called a \emph{loop}.
Two or more edges between the same pair of vertices are called \emph{multiple edges}. An undirected graph is \emph{simple} if it contains neither loops nor multiple edges. In this paper, we only consider finite simple undirected graphs.

For a vertex $v$ and an edge $e$ of a graph, if $v$ is an endpoint of $e$, we say that $e$ is \emph{incident} to $v$. For two distinct vertices $u$ and $v$ of a graph $G$, if $\{u,v\}\in E(G)$, then $u$ and $v$ are said to be \emph{adjacent}. For any vertex $v$ of a graph $G$, $\mathrm{Adj}_G(v)$ denotes the set of all vertices adjacent to $v$. The value $|\mathrm{Adj}_G(v)|$ is called the \emph{degree of $v$ in $G$} and is denoted by $\deg_G(v)$. A vertex of $G$ with degree 1 is called a \emph{leaf} or \emph{terminal vertex} of $G$, and an edge incident to a leaf is called a \emph{pendant edge} of $G$.

A graph $P$ with $V(P)=\{v_1,v_2, \dots, v_k\}$ and $E(P)=\{\{v_1,v_2\},\{v_2,v_3\}, \dots, \{v_{k-1},v_k\}\}$ is called a \emph{path}. In this case, the vertices $v_1$ and $v_k$ are called the \emph{endpoints} of $P$, and the other vertices are called the \emph{internal vertices} of $P$. The value $k-1$ is the \emph{length} of $P$ (where $k\geq 2$). A path can be represented as an alternating sequence of vertices and edges $v_1,\{v_1,v_2\}, v_2, \dots, v_{k-1}, \{v_{k-1},v_k\}, v_k$, but when the context is clear, we often write  $v_1-v_2-\dots-v_k$. A path with the endpoints $u$ and $v$ is called a \emph{$u$-$v$ path} and may be denoted by $[u,v]$. A graph $C$ with $V(C)=\{v_1,v_2, \dots,v_k\}$ and $E(C)=\{\{v_1,v_2\},\{v_2,v_3\}, \dots,\{v_{k-1},v_k\}, \{v_k,v_1\}\}$ such that all vertices are distinct is called a \emph{cycle} ($k \geq 3$).

Two graphs $G$ and $H$ are \emph{isomorphic}, denoted by $G \simeq H$, if there exists a bijection $\phi\colon V(G) \to V(H)$ such that for any $u,v \in V(G)$, we have $\{u,v\} \in E(G)$ if and only if $\{\phi(u),\phi(v)\} \in E(H)$. 
If $V(G) \subseteq V(H)$ and $E(G) \subseteq E(H)$, then $G$ is a \emph{subgraph} of $H$, and we say $H$ \emph{contains} $G$, or $H$ is a \emph{supergraph} of $G$. This relationship is denoted by $G \subseteq H$. In particular, if $G\subseteq H$ and $V(G)=V(H)$, then $G$ is a \emph{spanning} subgraph of $H$. If $G\subseteq H$ and $G\not \simeq H$, then we write $G \subsetneq H$.  In this case, we call $G$ a \emph{proper} subgraph of $H$ and  $H$  a \emph{proper} supergraph of $G$. The \emph{union} of two graphs $G$ and $H$, denoted by $G \cup H$, is the graph with vertex set $V(G) \cup V(H)$ and edge set $E(G) \cup E(H)$. Similarly, the \emph{intersection} $G \cap H$ is the graph with vertex set $V(G) \cap V(H)$ and edge set $E(G) \cap E(H)$.

A graph $G$ is \emph{connected} if there exists a $u$-$v$ path for any $u,v \in V(G)$. We  write $[u,v]_G$ to mean a $u$-$v$ path in a graph $G$. For two vertices $u$ and $v$ of a graph $G$, the \emph{distance} between $u$ and $v$ in $G$ is defined by the length of a shortest $u$-$v$ path in $G$. A \emph{connected component} of $G$ is a connected subgraph of $G$ that is not a proper subgraph of any other connected subgraph of $G$. Given a graph $G$, the graph obtained by removing an edge $e$ from $G$ is denoted by $G-e$, and this operation is called the \emph{deletion} of $e$. For any $E^\prime\subset E(G)$, the graph $G-E^\prime$ denotes the graph that is obtained by deleting all edges in $E^\prime$ from $G$. 
 Similarly, the graph obtained by removing a vertex $v$ together with all edges incident to $v$ from $G$ is denoted by $G - v$, and this operation is called the \emph{deletion} of $v$. For any $V^\prime\subseteq V(G)$, the graph $G-V^\prime$ denotes the graph that is obtained by deleting all vertices in $V^\prime$.
 
  For a graph $G$, $e\in E(G)$ is called a \emph{cut edge} of $G$ if $G-e$ is disconnected. Also, $v\in V(G)$ is called a \emph{cut vertex} of $G$ if $G-v$ is disconnected. An edge $e$ of an undirected graph $G$ is a cut edge of $G$ if and only if $e$ is not contained in any cycle of $G$ (e.g., \cite[Theorem 2.3]{bondymurty1976}). A graph $G$ is \emph{biconnected} if it contains no cut vertices. This implies that a biconnected graph does not contain any cut edges. A biconnected subgraph of $G$ that is not a proper subgraph of any other biconnected subgraph is called a \emph{block} of $G$.

For an edge $\{u,v\}$ of a graph $G$, the \emph{subdivision} of $\{u,v\}$ is the operation of replacing $\{u,v\}$ with a $u$-$v$ path of length at least $1$. A graph obtained by a finite sequence of edge subdivisions is called a \emph{subdivision} of $G$. For a vertex $v$ of a graph $G$ with $\deg_G(v)=2$, and its incident edges $e_1 = \{u_1, v\}$ and $e_2 = \{u_2, v\}$, the \emph{smoothing} of $v$ is the operation whereby $v$, $e_1$, and $e_2$ are deleted, and a new edge $\{u_1, u_2\}$ is added instead. 
Note that since we only consider simple graphs in this paper, if smoothing results in multiple edges, they are always replaced by a single edge. The simple graph obtained by smoothing all possible vertices of a graph $G$ is called the \emph{smoothed graph of $G$}. Two graphs $G$ and $H$ are \emph{homeomorphic}, denoted by $G \approx H$, if there exist subdivisions $G^\prime$ of $G$ and $H^\prime$ of $H$ such that $G^\prime \simeq H^\prime$ holds.

For an edge $e = \{u, v\}$ in a graph $G$, the \emph{contraction} of $e$ is the operation that merges $u$ and $v$ into a single new vertex, replaces $e$ by this vertex, and connects it to all neighbors of $u$ or $v$ while all other vertices and edges are kept unchanged. The resulting graph is denoted by $G / e$. As in smoothing, if contraction creates multiple edges, they are replaced by a single edge. More generally, for a set of edges $E^\prime \subseteq E(G)$, the graph obtained by contracting all edges in $E^\prime$ is denoted by $G / E^\prime$.
A graph $G^\prime$ is called a \emph{minor} of a graph $G$ if $G^\prime$ can be obtained from $G$ by a sequence of vertex deletions, edge deletions, and edge contractions. In this case, we say that $G$ \emph{contains} $G^\prime$ as a minor.

A graph with $n$ vertices in which every pair of distinct vertices is adjacent is called a \emph{complete graph} and is  denoted by $K_n$. A graph $G$ is \emph{bipartite} if $V(G)$ can be partitioned into two non-empty subsets $X$ and $Y$ such that every edge of $G$ has one endpoint in $X$ and the other in $Y$. The sets $X$ and $Y$ are called the \emph{partite sets} of $G$. A bipartite graph $G$ with partite sets $X$ and $Y$ with $|X|=p$ and $|Y|=q$ is called a \emph{complete} bipartite graph, denoted by $K_{p,q}$, if every vertex in $X$ is adjacent to every vertex in $Y$.

\subsection{Directed graphs}
A \emph{directed graph}, or \emph{digraph}, is an ordered pair $D=(V,A)$, where $V$ is a set of vertices and $A$ is a set of directed edges called \emph{arcs}. For a digraph $D$, its vertex set and arc set are denoted by $V(D)$ and by $A(D)$, respectively. A digraph $D$ is \emph{finite} if both $V(D)$ and $A(D)$ are finite sets. An arc is an ordered pair of vertices $(u,v)$; $u$ is called the \emph{tail} and $v$ the \emph{head} of the arc. A \emph{simple} digraph has no loops or parallel arcs. In this paper, all digraphs are assumed to be finite and simple.

Two digraphs $D$ and $H$ are \emph{isomorphic}, denoted by $D \simeq H$, if there exists a bijection $\phi\colon V(D) \to V(H)$ such that $(u,v) \in A(D)$ if and only if $(\phi(u),\phi(v)) \in A(H)$ for all $u,v \in V(D)$.  
The concepts of subgraphs, proper subgraphs, spanning subgraphs, supergraphs, and proper supergraphs are defined similarly to the undirected case.

The \emph{underlying graph} of a digraph $D$, denoted $D_u$, is the (simple) undirected graph obtained from $D$ by replacing each arc $(u,v)\in A(D)$ with an undirected edge $\{u,v\}$ and removing duplicate edges if necessary.

A digraph $D$ is \emph{weakly connected} if its underlying graph $D_u$ is connected. Vertex and arc deletion are defined analogously to the undirected case. Cut arcs, cut vertices, and biconnectivity for digraphs are defined according to whether weak connectivity is preserved after deletion.

The \emph{in-degree} $\mathrm{indeg}_D(v)$ and \emph{out-degree} $\mathrm{outdeg}_D(v)$ of a vertex $v$ are the number of arcs with head $v$ and with tail $v$, respectively.

A vertex $s$ with $\mathrm{indeg}_D(s) = 0$ is called a \emph{source} (or the \emph{root} when it is the only source of $D$), and a vertex $t$ with $\mathrm{outdeg}_D(t) = 0$ is called a \emph{sink}. A \emph{directed path} is a digraph $P$  with $V(P)=\{v_1,v_2, \dots,v_k\}$ and  $A(P)=\{(v_1, v_2),(v_2,v_3), \dots , (v_{k-1},v_k)\}$, where the vertices are all distinct. The vertices $v_1$ and $v_k$ are the \emph{first} and \emph{last} vertices of $P$, respectively. The length and internal vertices of $P$ are defined as in the undirected case.

A \emph{directed cycle} is digraph $C$ with $V(C)=\{v_1,v_2, \dots,v_k\}$ and $A(C)=\{(v_1, v_2),(v_2,v_3), \dots , (v_{k-1},v_k), (v_k, v_1)\}$ where the vertices are all distinct. A digraph is \emph{acyclic} if it has no directed cycle as a subgraph.
For an arc $(u,v)$ of an acyclic digraph, $u$ is sometimes called a \emph{parent} of $v$, and $v$ a \emph{child} of $u$.

\section{Known Results on Planarity}\label{sec:known_results}
\subsection{Planarity of Undirected Graphs}

The study of planarity in undirected graphs is a classical topic in graph theory. A \emph{drawing} of an undirected graph $G$ is an embedding of each vertex $v$ of $G$ to a point in the plane, and each edge $\{u,v\}$ to a simple curve connecting $u$ and $v$ without self-intersections. If $G$ can be drawn in the plane so that each edge crosses only at its endpoints, $G$ is said to be \emph{planar}, and such a drawing is called a \emph{planar drawing} or \emph{plane graph} of $G$.

Given a planar drawing of a planar graph $G$, a \emph{face} is a region of a planar drawing. The unbounded region is called the \emph{external face} while the others are \emph{interior faces}. 
A planar graph $G$ is \emph{outerplanar} if there exists a planar drawing of $G$ such that all vertices of $G$ are placed on the exterior face.

Planar graphs are characterized by forbidden structures, notably by the results of Kuratowski \cite{Kuratowski1930} and Wagner \cite{Wagner1937}.

\begin{theorem}[\cite{Kuratowski1930}]\label{thm:kuratowski}
An undirected graph $G$ is planar if and only if it contains no subgraph that is homeomorphic to $K_{3,3}$ or $K_5$.
\end{theorem}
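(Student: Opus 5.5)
The necessity direction is the easy half. If $G$ is planar, every subgraph of $G$ is planar. Subdividing or smoothing edges does not affect planarity, since a planar drawing of one graph is turned into a planar drawing of the other by adding or erasing degree-2 points on curves. So it suffices to show that $K_5$ and $K_{3,3}$ are not planar, and I would do this with Euler's formula $n-m+f=2$ for connected plane graphs. For $K_5$, every face of a simple plane graph with at least three vertices is bounded by at least $3$ edges, so $m\le 3n-6$; but $10>9$. For $K_{3,3}$, the graph is bipartite and has no triangles, so every face has length at least $4$ and $m\le 2n-4$; but $9>8$.

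For sufficiency I would argue by contradiction with a minimal counterexample: a non-planar graph $G$ containing no subdivision of $K_5$ or $K_{3,3}$, with $|E(G)|$ as small as possible. The plan has three reductions.

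\begin{enumerate}[(i)]
\item $G$ is $2$-connected. A planar drawing can be re-embedded so that any chosen vertex lies on the outer face. Planar drawings of the blocks can therefore be glued at cut vertices, and a disconnected graph is planar exactly when its components are.
\item $G$ is $3$-connected. Suppose $\{u,v\}$ is a $2$-cut, and split $G$ into pieces $G_1,G_2$, each with an added virtual edge $uv$. Each piece has fewer edges than $G$. Neither piece contains a forbidden subdivision, because the virtual edge can be replaced by a $u$-$v$ path through the other piece. By minimality both pieces are planar. Embed each with $uv$ on the outer face and glue along $uv$; this embeds $G$, a contradiction.
\item Remove a 3-connected edge. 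Use the lemma that every $3$-connected graph other than $K_4$ has an edge $e=xy$ such that $G/e$ is $3$-connected (Thomassen's contraction lemma). The contraction $G/e$ has fewer edges and contains no $K_5$ or $K_{3,3}$ subdivision: a subdivision in $G/e$ using the merged vertex lifts to one in $G$, possibly after a short case split when that vertex is a branch vertex of degree $3$ or $4$. Hence $G/e$ is planar, and since it is $3$-connected its embedding is essentially unique.
\end{enumerate}

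The main obstacle is the final step: expanding the contracted vertex back to $x$ and $y$. In the plane drawing of $G/e$, delete the merged vertex $z$. By $3$-connectivity the face containing $z$ is bounded by a cycle $C$, and the neighbours of $x$ and $y$ lie on $C$. I would split into cases according to how the neighbours of $x$ and $y$ interleave along $C$.

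\begin{itemize}
\item If the neighbours of $y$ all lie in one segment of $C$ between two consecutive neighbours of $x$, then $x$ and $y$ can be placed inside $C$ and $G$ is planar, a contradiction.
\item If $x$ and $y$ have three common neighbours on $C$, then $G$ contains a subdivision of $K_5$.
\item If the neighbours alternate as $x,y,x,y$ around $C$, then $G$ contains a subdivision of $K_{3,3}$.
\end{itemize}

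Checking that these cases are exhaustive, and building the paths explicitly along $C$ in each, is the technical heart of the argument. Since this is the classical theorem of Kuratowski, in the paper I would simply cite \cite{Kuratowski1930} rather than reproduce it, and use it as a black box in the later forbidden-structure arguments.
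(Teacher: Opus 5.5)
Your proposal is correct, but the paper does not prove this statement. It quotes Kuratowski's theorem from \cite{Kuratowski1930} and uses it only as a black box, mostly in the rephrased form ``no subgraph isomorphic to a subdivision of $K_{3,3}$ or $K_5$''. That is exactly what your last sentence proposes, so nothing in the paper depends on which proof one has in mind.

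What you sketch is Thomassen's contraction proof rather than Kuratowski's original argument. Your three cases on $C$ are exhaustive once you use $\deg_G(x),\deg_G(y)\ge 3$. In particular, if $y$ has exactly two neighbours on $C$, both shared with $x$ and not consecutive among the neighbours of $x$, the alternating pattern still occurs.

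The one step worth stating precisely is the lifting through the contraction. Suppose a $K_5$-subdivision in $G/e$ has the merged vertex as a branch vertex whose four paths split two--two, towards $p_1,p_2$ via $x$ and towards $p_3,p_4$ via $y$. This does not lift to a $K_5$-subdivision. It lifts to a $K_{3,3}$-subdivision of $G$ with parts $\{x,p_3,p_4\}$ and $\{y,p_1,p_2\}$, using the edge $xy$. So the minimal counterexample must exclude both obstructions simultaneously, as yours does.

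The common textbook alternative avoids this case split. It runs the contraction argument with minors, which are trivially preserved under contraction, and converts $K_5$ or $K_{3,3}$ minors into subdivisions at the end.
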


Since $K_{3,3}$ and $K_5$ contain no vertices of degree two or less, Theorem~\ref{thm:kuratowski} can be rephrased as follows: An undirected graph $G$ is planar if and only if it contains no subgraph that is isomorphic to a subdivision of $K_{3,3}$ or $K_5$.

\begin{theorem}[\cite{Wagner1937}]\label{thm:wagneri}
An undirected graph $G$ is planar if and only if it contains neither $K_{3,3}$ nor $K_5$ as a minor.
\end{theorem}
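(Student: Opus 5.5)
We plan to derive the statement from Theorem~\ref{thm:kuratowski}, proving the two directions separately. Throughout we use that $K_5$ and $K_{3,3}$ are themselves non-planar, which is the special case of Theorem~\ref{thm:kuratowski} where $G$ is $K_5$ or $K_{3,3}$: each contains itself as a subgraph homeomorphic to a forbidden graph.

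\emph{Planar $\Rightarrow$ no $K_5$ or $K_{3,3}$ minor.} We show that the class of planar graphs is closed under the three minor operations. Then any minor of a planar graph is planar, and so neither $K_5$ nor $K_{3,3}$ can be a minor of a planar $G$. Vertex and edge deletions are trivial: we simply erase the corresponding points and curves from a planar drawing. For the contraction of an edge $e=\{u,v\}$, we start from a planar drawing of $G$ and take a thin closed neighbourhood $U$ of the curve representing $e$. We choose $U$ so that it meets no other vertex and meets only the initial segments of the other edges at $u$ and $v$. Inside $U$ we redraw each edge that formerly ended at $u$: it follows its old curve until it enters $U$, and then runs to $v$ alongside the curve of $e$, without crossing any other curve. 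This is possible because the edge-ends at $u$ appear in a cyclic order around $u$, and they can be fanned out in parallel alongside $e$. Finally we delete $u$ and the curve of $e$. Any multiple edges created are removed by deleting duplicate curves, which keeps the drawing planar. The result is a planar drawing of $G/e$. We expect this topological rerouting to be the only step needing care, and we will handle it with the standard neighbourhood argument just described.

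\emph{No $K_5$ or $K_{3,3}$ minor $\Rightarrow$ planar.} We argue by contraposition. Suppose $G$ is not planar. By Theorem~\ref{thm:kuratowski}, $G$ contains a subgraph $H$ that is a subdivision of $K_5$ or of $K_{3,3}$. Deleting all vertices and edges of $G$ outside $H$ turns $G$ into $H$ by a sequence of deletions. Each internal vertex of a subdivided path in $H$ has degree $2$. Contracting one of the edges incident to it merges it into a neighbour, which exactly undoes a single subdivision step. No parallel edges arise here, because the branch vertices of $K_5$ and $K_{3,3}$ are joined by at most one subdivided path. Repeating this until all subdivision vertices are gone yields $K_5$ or $K_{3,3}$. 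Hence $G$ contains one of them as a minor, which completes the plan.
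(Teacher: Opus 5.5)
Your argument is correct. It differs from the paper only in that the paper gives no proof at all: the statement is quoted from \cite{Wagner1937} as background, next to Theorem~\ref{thm:kuratowski}, and is never used later, since all of the paper's arguments rely on Kuratowski's theorem alone.

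You derive Wagner's theorem from Theorem~\ref{thm:kuratowski} using two standard facts. First, a subdivision of $K_5$ or $K_{3,3}$ is in particular a minor, obtained by contracting edges at degree-$2$ vertices. Your check that no parallel edges arise there is exactly what is needed, because the paper's contraction silently merges multiple edges and could otherwise lose an edge. Second, planarity is closed under deletion and contraction.

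What your route buys is economy. Only one deep theorem is required, and you use only the easy implication between the two characterizations. Deducing Kuratowski's theorem from Wagner's would need the harder fact that a $K_5$ minor forces a $K_5$ or $K_{3,3}$ subdivision, and your direction avoids this.

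The contraction redrawing, which you rightly flag as delicate, needs slightly more care than you give it. For arbitrary Jordan arcs, a thin neighbourhood of $e$ need not meet the other edges at $u$ only in initial segments. The cyclic order of edge-ends at $u$ is also not immediate. You should first replace the drawing by a polygonal one, which always exists. After that, both your fan-out and the simpler device of shrinking a small disk around the arc of $e$ to a single point go through cleanly.
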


Analogous to Kuratowski's theorem, the forbidden subgraphs of outerplanar graphs are also known well.

\begin{theorem}[\cite{AIHPB_1967__3_4_433_0}]\label{outer_forbidden}
An undirected graph $G$ is outerplanar if and only if it contains no subgraph that is homeomorphic to $K_{2,3}$ or $K_4$.
\end{theorem}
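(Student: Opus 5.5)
The statement to prove is Theorem~\ref{outer_forbidden}: $G$ is outerplanar if and only if it contains no subgraph homeomorphic to $K_{2,3}$ or $K_4$. The plan is to reduce it to Kuratowski's theorem (Theorem~\ref{thm:kuratowski}) through the standard apex construction. Let $G^+$ be the graph obtained from $G$ by adding a new vertex $a$ adjacent to every vertex of $G$. The first step is the equivalence: $G$ is outerplanar if and only if $G^+$ is planar. If $G$ has a planar drawing with all vertices on the external face, place $a$ in that face and draw the edges from $a$ to each vertex without crossings, which is possible because every vertex lies on the boundary of that face. Conversely, given a planar drawing of $G^+$, the faces around $a$ together form a region containing all of $G$'s vertices on its boundary. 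Deleting $a$ merges these faces into one, and by re-embedding so that this face is external (stereographic projection), we get an outerplanar drawing of $G$.

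For necessity, I would check directly that $K_4$ and $K_{2,3}$ are not outerplanar: adding an apex to them gives $K_5$ and $K_{3,3}$ plus edges, respectively, which are nonplanar. Outerplanarity is preserved under taking subgraphs and under smoothing subdivision vertices, since a subdivided path drawn with its vertices on the outer face can be replaced by a single curve. So no outerplanar graph contains a subdivision of either graph.

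For sufficiency, suppose $G$ is not outerplanar. Then $G^+$ is nonplanar, so by Theorem~\ref{thm:kuratowski} it contains a subdivision $S$ of $K_5$ or $K_{3,3}$. If $a\notin V(S)$, then $S\subseteq G$, and $S$ contains a subdivision of $K_4$ (delete a branch vertex of $K_5$) or of $K_{2,3}$ (delete a branch vertex of $K_{3,3}$). If $a$ is a subdivision vertex of $S$, then $a$ lies on a single subdivided edge; removing that path leaves in $G$ a subdivision of $K_5$ minus an edge or $K_{3,3}$ minus an edge, which still contains a subdivision of $K_4$ or $K_{2,3}$ respectively. If $a$ is a branch vertex of $S$, then deleting $a$ and its incident paths leaves in $G$ a subdivision of $K_4$ or of $K_{2,3}$ directly.

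The main obstacle is making the face-merging argument in the first step rigorous, particularly when $G$ is disconnected or $a$ has low degree. I would handle disconnected $G$ by treating components separately, since outerplanarity and both forbidden subgraphs are componentwise properties. The case analysis on the position of $a$ is routine.
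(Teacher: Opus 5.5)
The paper gives no proof of this statement; it quotes it from Chartrand--Harary. Your reduction through the apex graph $G^+$ to Kuratowski's theorem is the standard argument, and its steps are sound. Both directions of ``$G$ outerplanar $\iff$ $G^+$ planar'' are right. The facts $K_4^+=K_5$ and $K_{2,3}^+\supseteq K_{3,3}$ give non-outerplanarity of the two graphs. Your three cases, with $a$ absent from, a subdivision vertex of, or a branch vertex of the Kuratowski subdivision $S$, each correctly produce a subdivision of $K_4$ or $K_{2,3}$ inside $G$.

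The step you flag as the main obstacle is in fact easy and needs no case distinction. In a planar drawing of $G^+$, take the point $a$ together with the open arcs of the edges $av$. This set is connected and disjoint from the drawing of $G$, so it lies in a single face $f$ of $G$. Every vertex $v$ is an endpoint of the arc $av$, so it lies on the boundary of $f$. Disconnectedness of $G$ plays no role.

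The point you should not leave implicit is the meaning of ``homeomorphic''. Your necessity argument proves that an outerplanar graph contains no \emph{subdivision} of $K_4$ or $K_{2,3}$, and you then treat this as the statement. For $K_4$ the two notions agree because it has minimum degree $3$, but for $K_{2,3}$ they do not. Under the paper's definition (existence of a common subdivision), $K_4-e$ is homeomorphic to $K_{2,3}$: subdividing the edge joining its two degree-$3$ vertices yields $K_{2,3}$. Yet $K_4-e$, a $4$-cycle with one chord, is outerplanar. So the ``only if'' direction is false as literally stated, and no argument can bridge that step. The theorem must be read with ``isomorphic to a subdivision of'', which is exactly what your proof establishes. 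The ``if'' direction holds under either reading, since a subdivision is in particular homeomorphic. State this reading explicitly at the outset rather than switching to it silently.
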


\subsection{Planarity for Directed Graphs}
A digraph $D$ is said to be \emph{planar} (resp. \emph{outerplanar}) if its underlying graph $D_u$ is planar (resp. outerplanar). The concept of ``upward planarity'' has long been studied in graph theory as a notion of planarity specific to digraphs   (e.g., \cite{HL1996upss,bbmt-oupsst-1998,DIBATTISTA1988175, garg1995upward}).

A digraph is \emph{upward planar} if it admits a planar upward drawing, where an \emph{upward drawing} of a digraph is such that all the edges are represented by directed curves increasing monotonically in the vertical direction \cite{bbmt-oupsst-1998}. It is known that a digraph has an upward drawing if and only if it is acyclic \cite{bbmt-oupsst-1998}.

Theorem~\ref{thm:up_pst} characterizes upward planar digraphs using planar $st$-digraphs (see Definition~\ref{dfn:st-digraph}). We will use this result in Section~\ref{sec:characterization.transformation}.

\begin{definition}\label{dfn:st-digraph}
An \emph{$st$-digraph} is any directed graph that has a unique source $s$, a unique sink $t$, and a unique arc $(s, t)$. 
In particular, an acyclic and planar $st$-digraph is called an \emph{acyclic planar} $st$-digraph.
\end{definition}

\begin{theorem}[\cite{DIBATTISTA1988175, Kelly1987}; see also Theorem 1 in \cite{garg1995upward}]\label{thm:up_pst}
A directed graph $G$ is upward planar if and only if there exists a planar $st$-digraph containing $G$ as a spanning subgraph.
\end{theorem}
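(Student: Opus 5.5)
This theorem is cited from \cite{DIBATTISTA1988175, Kelly1987}. I would prove it by relating upward drawings to drawings of planar $st$-digraphs, handling each direction separately.

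\emph{Sufficiency.} Suppose $G$ is a spanning subgraph of a planar $st$-digraph $H$. Here $H$ is taken to be acyclic, as in the standard statement; an upward drawing exists only for acyclic digraphs, so acyclicity is implicit.

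I would show that $H$ has an upward planar drawing and then delete the arcs not in $G$.

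The tool is the classical recursive argument of Di Battista--Tamassia. Fix a planar embedding of $H$ with the arc $(s,t)$ on the outer face. Because $s$ is the only source and $t$ the only sink, every face boundary consists of two directed paths with a common start and a common end. Every vertex lies on a directed $s$-$t$ path.

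Induct on the number of vertices, contracting an arc $(u,v)$ that is the only path from $u$ to $v$; such an arc exists unless $H$ is a triangle. Contracting it keeps $H$ an $st$-digraph. Draw the contracted graph upward by induction, then split the merged vertex back into $u$ slightly below $v$. Because the embedding orders the in-arcs and out-arcs of each vertex consecutively, this split can be done without crossings.

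An alternative is to assign $y$-coordinates by a topological numbering that is consistent with the dual order of faces, then route arcs monotonically. This is the visibility/dominance drawing construction.

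\emph{Necessity.} Given an upward planar drawing of $G$, perturb it so that all vertices have distinct $y$-coordinates. Then augment it, keeping planarity and upwardness, until exactly one source and one sink remain:
\begin{enumerate}
\item Add a new lowest vertex $s$ below everything and a new highest vertex $t$ above everything, and draw the arc $(s,t)$ around the outside.
\item Inside each face of the drawing, take each vertex that is a source other than $s$. It is a local minimum of the face boundary, and there is a lower vertex visible from it inside that face, or else $s$ is visible through the outer face. Add an upward arc from that lower vertex to the source.
\item Treat each sink other than $t$ symmetrically, adding an upward arc to a higher visible vertex.
\end{enumerate}

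Each added arc is upward, so the result stays acyclic. The only remaining source is $s$ and the only remaining sink is $t$. To obtain a spanning supergraph, one must avoid adding new vertices; do this by connecting to an existing extreme vertex of $G$ instead, or by letting $s$ and $t$ be the lowest and highest vertices of $G$ and adding the arc $(s,t)$ along the outer face.

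\emph{Main obstacle.} The hardest step is the necessity direction. One must show that a crossing-free upward "visibility" arc always exists for every intermediate source and sink. I would handle this by sweeping a horizontal ray from the local extremum into its face until it hits the boundary, following Kelly's argument.
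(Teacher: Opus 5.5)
The paper gives no proof of this statement. It is imported from Di Battista--Tamassia and Kelly, so your outline can only be checked against the classical argument, whose overall architecture it follows. You are right that the $st$-digraph must be acyclic. Definition~\ref{dfn:st-digraph} does not require it, the ``if'' direction fails without it (e.g.\ $s\to a\to b\to c\to a$, $c\to t$, $s\to t$), and the paper only applies the theorem to acyclic $st$-digraphs.

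The sufficiency induction, however, is not yet a proof. There are three problems.

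(i) The existence claim is misstated. In the triangle $s\to a\to t$, $s\to t$, both $(s,a)$ and $(a,t)$ are the only paths between their ends. What holds is that for $|V(H)|\ge 3$, every arc of a longest directed $s$--$t$ path is the unique path between its ends and differs from $(s,t)$. The base case is the single arc $(s,t)$.

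(ii) Consecutiveness of in- and out-arcs at $u$ and at $v$ separately is not what makes the split possible. You need that on each side of $(u,v)$, either $u$ has no further out-arc or $v$ has no further in-arc. This is where uniqueness of the $u$--$v$ path enters: otherwise the face on that side would have $u$ as its source and $v$ as its sink, and its other boundary path would be a second directed $u$--$v$ path. The induction hypothesis must also be embedding-preserving: every plane $st$-digraph with $(s,t)$ on the outer face has an upward drawing realizing that embedding. Otherwise the rotation at the merged vertex in the inductive drawing need not be the one you analysed.

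(iii) Contracting $(u,v)$ merges $(u,w),(v,w)$ or $(w,u),(w,v)$ for each common neighbour $w$, and after the split both arcs must be drawn. When the triangle $uvw$ is separating, the restored copy must go around the enclosed part, not beside the surviving arc. You can fix this in one of two ways. Either allow parallel arcs throughout the induction, first deleting one arc of a digon face if there is one (if there is none, arcs of a longest $s$--$t$ path have no parallel copies). Or triangulate and contract an arc lying on no separating triangle, as Di Battista--Tamassia do. Your alternative via a visibility representation sidesteps contraction altogether.

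The necessity direction is sound in outline, but you should make the visibility step concrete. Shoot the ray vertically downward from the source $v$. If it first meets a vertex $a$, use the segment. If it meets the interior of an arc $(a,b)$, join $a$ to $v$ by a curve running alongside $(a,b)$ on the side facing $v$ and then straight up. This curve is monotone, crossing-free, and starts below $v$. A lower vertex visible along a straight segment need not exist. With a horizontal ray you must likewise connect to the tail of the arc hit, not to the hit point. Process sources and sinks one at a time in the current drawing.

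For the spanning requirement, keep the new $s,t$ and afterwards contract $(s,s_0)$ and $(t_0,t)$, where $s_0,t_0$ are the lowest and highest vertices of $G$. Here $(s,s_0)$ is the only in-arc of $s_0$ and $(t_0,t)$ is the only out-arc of $t_0$. So these contractions preserve planarity, acyclicity and the unique source and sink, and they turn $(s,t)$ into $(s_0,t_0)$. This is the precise form of your ``connect to an existing extreme vertex''.
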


\begin{figure}[htbp]
\centering
\includegraphics[width=0.5\textwidth]{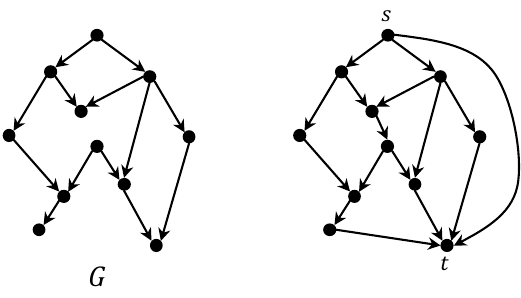}
\caption{An illustration of Theorem \ref{thm:up_pst}. Left: an upward planar digraph $G$. Right: an acyclic planar $st$-digraph that contains $G$ as a spanning subgraph.
\label{fig:ex_up_pst}
}
\end{figure}

\subsection{Terminal Planar Phylogenetic Networks}\label{sec:terminal.planar.literature}

Phylogenetic networks are classes of graphs, directed or undirected, that generalize phylogenetic trees and have been widely used to describe complex evolutionary histories and relationships among species. Typically, in a directed phylogenetic network, the leaves represent extant species, and the root represents their common evolutionary ancestor. 

Planar drawings (or nearly planar drawings) of phylogenetic networks can facilitate the interpretation of evolutionary data. In particular, when there exists a planar drawing in which all terminal vertices, i.e., the root and the leaves, are placed on the external face, even intricate reticulate evolution can be visually represented in a clear manner. Motivated by this, \upd{Moulton--Wu} \cite{WuMoulton9804871} introduced the notion of terminal planar networks. The terminal planar property is defined analogously for undirected and directed graphs, but in this subsection, we review the characterization of directed terminal planar networks as given in \cite{WuMoulton9804871}.

\begin{definition}\label{dfn:phylo}
Let $S = \{t_1, \dots, t_n\}$ be a nonempty finite set ($n \geq 1$). An \emph{undirected phylogenetic network} (with terminal set $S$) is defined to be a connected undirected graph $N$ such that $S$ can be identified with the set $\{v\in V(N) \mid \deg_N(v) = 1\}$. In this case, each element of $S \cap V(N)$ is  called a \emph{leaf} of $N$. 
A \emph{directed phylogenetic network} (with source $s$ and sink set $S$) is defined to be a connected acyclic digraph $N$ that satisfies the following conditions:
\begin{enumerate}
    \item $N$ has a unique source $s$ with $(\mathrm{indeg}_N(s),\mathrm{outdeg}_N(s)) = (0,1)$.
    \item $S$ can be identified with the set of sinks of $N$, and each sink $t_i$ of $N$ satisfies $(\mathrm{indeg}_N(t_i), \mathrm{outdeg}_N(t_i)) = (1,0)$.
\end{enumerate}
In a directed phylogenetic network $N$, the source $s$ and each sink $t_i$ are also called the \emph{root} and the \emph{leaf} of $N$, respectively.
\end{definition}

\begin{remark}\label{rem:no.null}
Any directed phylogenetic network $N$ with a source $s$ and sinks $t_1, \dots, t_k$ ($k \ge 1$) can be converted into an undirected phylogenetic network $N_u$ with terminal set $\{s, t_1, \dots, t_k\}$ by simply ignoring all edge orientations. However, for an undirected phylogenetic network with terminal set $S$, it may not be possible to assign edge directions so that the resulting directed phylogenetic network has one vertex in $S$ designated as the root and the remaining vertices in $S$ as leaves. 
This is illustrated by the undirected phylogenetic networks $G_1$ and $G_2$ in 
Figure~\ref{fig:rem-empty}. While $G_2$ can be oriented to obtain a directed phylogenetic network such that $\{t_1, t_2, t_3, t_4\}$ consists of a unique source and three sinks, $G_1$ cannot be oriented into a directed phylogenetic network with $\{t_1, t_2\}$ being a pair of the source and sink. 

To see the difference between $G_1$ and $G_2$, consider the block induced by $\{a,b,c\}$ of each graph. In $G_1$, regardless of which of $t_1$ and $t_2$ is the source \upd{and the other is the sink}, there is no pair $\{v_{\mathrm{in}}, v_{\mathrm{out}}\}$ of distinct vertices in $\{a,b,c\}$ such that the flow from the source comes into the block through $v_{\mathrm{in}}$ and goes out from $v_{\mathrm{out}}$ to arrive at \upd{the} sink in the end. More precisely, the flow from the source inevitably enters and exits $a$ before arriving at \upd{the} sink, implying that the block must contain a directed cycle \upd{on $\{a,b,c\}$}.  By contrast,   in $G_2$, when either $t_1$ or $t_2$ is the source \upd{and thus $t_3$ is a sink}, the flow enters $a$ and leaves through $b$ to arrive at $t_3$. Similarly, when $t_3$ is the source, the flow enters $b$ and leaves through $a$ to arrive at \upd{the sinks} $t_1$ \upd{and} $t_2$. \upd{In either case, one can easily construct a flow on $G_2$ that avoids directed cycles.}
\end{remark}

\begin{figure}[htbp]
\centering
\includegraphics[width=.8\textwidth]{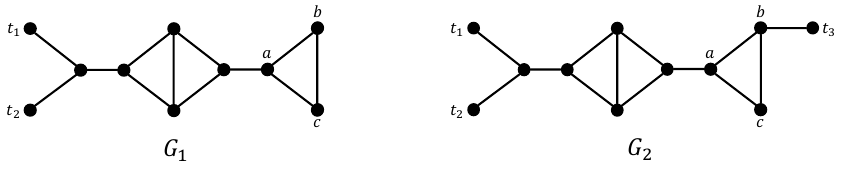}
\caption{\upd{The undirected phylogenetic networks $G_1$ and $G_2$ mentioned in Remark~\ref{rem:no.null}: $G_2$ can be oriented into a directed phylogenetic network while $G_1$ cannot.} 
\label{fig:rem-empty}
}
\end{figure}

\begin{definition}\label{dfn:terminal}
An undirected phylogenetic network $N$ is \emph{terminal planar} if there exists a planar drawing of $N$ in which all leaves are on the external face. Similarly, a directed phylogenetic network $N$ is \emph{terminal planar} if there is a planar drawing in which both the root and all leaves of $N$ are on the external face.
\end{definition}

By Definition~\ref{dfn:terminal}, if a directed phylogenetic network $N$ is terminal planar, then its underlying graph $N_u$ is also terminal planar. For an undirected phylogenetic network $N$ with terminal set $S$, suppose there exists a acyclic digraph $\vec{N}$ with a unique source $s\in S$ and a sink set $S\setminus \{s\}$, meaning that $\vec{N}$ is a directed phylogenetic network. Then,  if $N$ is terminal planar, so is $\vec{N}$.

\upd{Moulton--Wu} \cite{WuMoulton9804871}  obtained  Theorem~\ref{thm:ptpu} that  characterizes when a directed phylogenetic network $N$ is terminal planar using a supergraph $N^+$ of $N$, which is defined in Definition~\ref{dfn:N^+} (see Figure~\ref{fig:ex_t_completion} for an illustration). 

\begin{definition}[\cite{WuMoulton9804871}]\label{dfn:N^+}
Given a directed phylogenetic network $N = (V, A)$ with source $s$ and sink set $\{t_1, \dots, t_n\}$, let $V(N^+) := V \cup \{t\}$ and $A(N^+) := A \cup \{(t_1, t), \dots, (t_n, t)\}$. The digraph $N^+$ is called the \emph{$t$-completion} of $N$.
\end{definition}

Note that $N^+$ was called a ``completion'' in \cite{WuMoulton9804871}, but in this paper, we will use the term ``$t$-completion'' to distinguish it from other completions to be defined later.

\begin{theorem}[\cite{WuMoulton9804871}]\label{thm:ptpu}
For a directed phylogenetic network $N$, $N$ is terminal planar if and only if its $t$-completion $N^+$ is upward planar.
\end{theorem}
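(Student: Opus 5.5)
The plan is to prove both directions of Theorem~\ref{thm:ptpu} by passing through planar $st$-digraphs via Theorem~\ref{thm:up_pst}. The bridge between the two sides is a single planar digraph in which the root $s$ and the new sink $t$ are joined by an arc $(s,t)$. Putting $s$ and $t$ on one edge, and making $t$ adjacent to all leaves, is exactly what forces the terminals onto one face.

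\emph{($\Leftarrow$)} Suppose $N^+$ is upward planar. By Theorem~\ref{thm:up_pst}, there is a planar $st$-digraph $G$ that contains $N^+$ as a spanning subgraph. The unique sink of $G$ must be $t$, because every other vertex of $N^+$ already has an out-arc. Likewise, its unique source must be $s$, because every other vertex has an in-arc in $N$. Hence $G$ contains the arc $(s,t)$.

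Fix a planar drawing of $G$ and restrict it to $N^+$; this is still planar. Consider the star of arcs $(t_i,t)$ together with $(s,t)$ at the vertex $t$. I would redraw the plane so that a small disk around $t$ is taken to be the outer region, by re-embedding on the sphere with the point at infinity chosen near $t$. Deleting $t$ and its incident arcs then leaves a drawing of $N$ in which $s, t_1,\dots,t_n$ all lie on the face that contained $t$. That face is the outer face, so $N$ is terminal planar.

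The delicate point is checking that the face created by removing $t$ is a single face containing all the neighbours of $t$. This holds because the neighbours of $t$ all lie on the boundary of the region that $t$ occupied, and deleting a vertex merges all faces around it into one face.

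\emph{($\Rightarrow$)} Suppose $N$ has a planar drawing with $s,t_1,\dots,t_n$ on the outer face. Place a new point $t$ far out in the outer face and join it by noncrossing curves to each $t_i$ and to $s$. This is possible because every one of these vertices is accessible from the unbounded region. The resulting graph $N^+\cup\{(s,t)\}$ is planar, acyclic, and has unique source $s$ and unique sink $t$.

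We still need it to be upward planar, and a planar drawing is not yet an upward one. The cleanest route is to strengthen $N^+\cup\{(s,t)\}$ to a planar $st$-digraph in the sense required by Theorem~\ref{thm:up_pst}. The standard fact I would invoke is that a planar acyclic digraph with a single source and a single sink that lie on a common face (here they share the edge $(s,t)$) is a planar $st$-graph and is therefore upward planar (Di Battista--Tamassia). Applying Theorem~\ref{thm:up_pst} to this graph, which contains $N^+$ as a spanning subgraph, shows that $N^+$ is upward planar.

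I expect the main obstacle to be this last step. It requires either citing that every planar $st$-digraph with $s$ and $t$ on the outer face admits an upward drawing, or proving it directly. A direct proof would proceed by induction, inserting ``left/right'' faces and assigning $y$-coordinates by a topological order consistent with the embedding.
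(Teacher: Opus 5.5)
Your proof is correct. The paper does not prove this statement; it quotes it from Moulton--Wu. The relevant comparison is therefore with the paper's proof of $1\Leftrightarrow 2$ in Theorem~\ref{lem:N_completion}, which uses Theorem~\ref{thm:up_pst} in the same two ways you do.
\begin{itemize}
\item From an upward planar $N^+$, obtain an $st$-digraph $D\supseteq N^+$, identify its source and sink as $s$ and $t$, and conclude $(s,t)\in A(D)$.
\item Conversely, $N^\ast$ is itself an acyclic planar $st$-digraph containing $N^+$ as a spanning subgraph.
\end{itemize}
The difference is the direction of the logic. The paper takes Theorem~\ref{thm:ptpu} as a black box and deduces ``$N$ terminal planar $\Leftrightarrow$ $N^\ast$ planar''. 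You prove that equivalence directly by face surgery: add $t$ in the outer face, adjacent to all terminals; or delete $t$ and send the merged face to infinity. You then deduce Theorem~\ref{thm:ptpu} from it. Your route shows that Theorem~\ref{thm:ptpu} follows from Theorem~\ref{thm:up_pst} plus elementary embedding arguments. It also shows that the $1\Leftrightarrow 2$ part of Theorem~\ref{lem:N_completion} needs no upward planarity at all. The paper's route is shorter only because it delegates that work to the citation.

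Two small remarks.
\begin{itemize}
\item The step you flag as the main obstacle is not one. $N^\ast=N^+\cup\{(s,t)\}$ is acyclic and planar, with unique source $s$, unique sink $t$, and the arc $(s,t)$. So it is an acyclic planar $st$-digraph in the sense of Definition~\ref{dfn:st-digraph}, and Theorem~\ref{thm:up_pst} applies verbatim, exactly as in the paper's ``spanning subgraph of itself'' step. No inductive construction of an upward drawing is needed.
\item In $(\Leftarrow)$ you restrict the drawing of $G$ to $N^+$ and then use the arc $(s,t)$, which is not in $N^+$. Restrict to $N^+\cup\{(s,t)\}$ instead. Alternatively, keep all of $G$, delete $t$, then delete the arcs of $G-t$ not in $N$; this only merges faces, so the terminals stay on the outer face.
\end{itemize}
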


\upd{Moulton--Wu} \cite[Theorem 3.2]{WuMoulton9804871} proved the following strict inclusion relations among various classes of planar phylogenetic networks:
\begin{equation}\label{eq:inclusion}
\mathcal{P}_{\mathrm{outer}}(S) \subsetneq \mathcal{P}_{\mathrm{terminal}}(S) \subsetneq \mathcal{P}_{\mathrm{upward}}(S) \subsetneq \mathcal{P}(S).
\end{equation}
Here, for any set $S$ with $|S| \ge 2$, each of the sets $\mathcal{P}(S)$, $\mathcal{P}_{\mathrm{upward}}(S)$, $\mathcal{P}_{\mathrm{terminal}}(S)$, and $\mathcal{P}_{\mathrm{outer}}(S)$ denotes the collection of all planar, upward planar, terminal planar, and outerplanar directed phylogenetic networks with sink set $S$, respectively. See Figure~\ref{fig:ex_phylonet_planarity} for explicit examples illustrating that each inclusion in \eqref{eq:inclusion} is strict.

The first inclusion in \eqref{eq:inclusion} follows from the fact that if a directed (or undirected) phylogenetic network $N$ is outerplanar, then there exists a planar drawing in which all vertices of $N$ are placed on the external face, namely, $N$ is terminal planar. The second inclusion in \eqref{eq:inclusion} follows from Theorem~\ref{thm:ptpu} since $N$ is a subgraph of $N^+$. 

\begin{figure}[htbp]
\centering
\includegraphics[width=.5\textwidth]{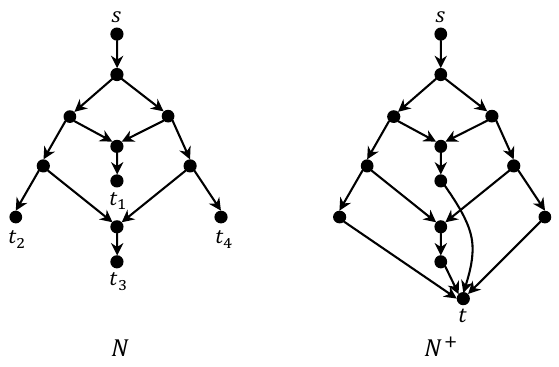}
\caption{
A directed phylogenetic network $N$ with source $s$ and sink set $\{t_1, t_2, t_3, t_4\}$ (left) and the $t$-completion $N ^+$  of $N$ (right).
\label{fig:ex_t_completion}
}
\end{figure}

\begin{figure}[htbp]
\centering
\includegraphics[width=.7\textwidth]{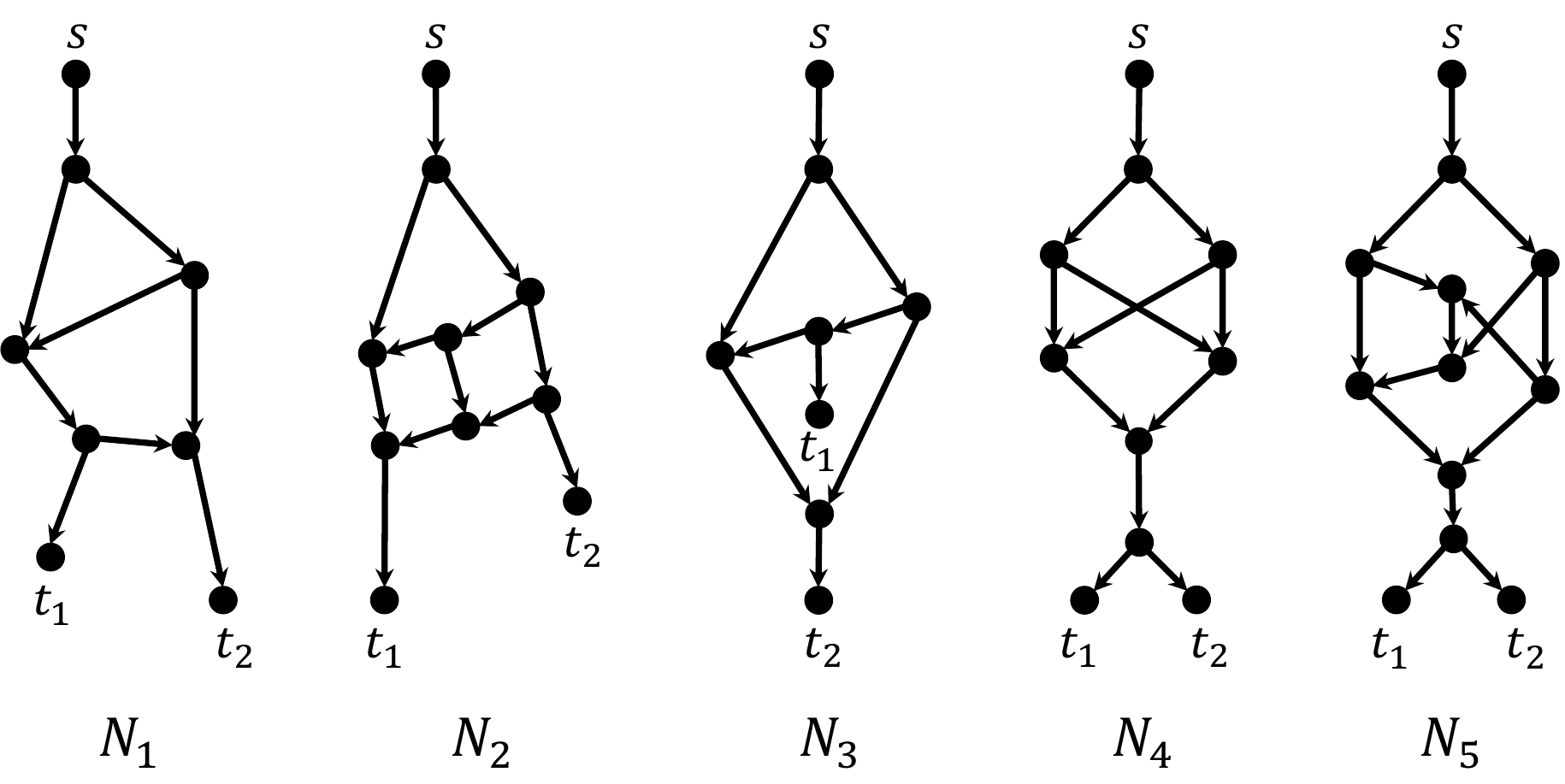}
\caption{Examples demonstrating the strict inclusions expressed in \eqref{eq:inclusion}. $N_1\in \mathcal{P}_{\mathrm{outer}}(S)$, $N_2\in \mathcal{P}_{\mathrm{terminal}}(S)\setminus \mathcal{P}_{\mathrm{outer}}(S)$, $N_3\in \mathcal{P}_{\mathrm{upward}}(S)\setminus \mathcal{P}_{\mathrm{terminal}}(S)$, $N_4\in \mathcal{P}(S)\setminus \mathcal{P}_{\mathrm{upward}}(S)$, $N_5 \not \in \mathcal{P}(S)$ (all examples except $N_1$ are reproduced from Fig.\ 2 of \cite{WuMoulton9804871}).
\label{fig:ex_phylonet_planarity}
}
\end{figure}

\section{Characterization of Terminal Planar Networks Using Supergraphs}\label{sec:characterization.transformation}

Theorem~\ref{thm:ptpu} provides a characterization of terminal planar directed phylogenetic networks by focusing on the upward planarity of the $t$-completion. However, since the forbidden structures for upward planar networks remain unresolved, Theorem~\ref{thm:ptpu} does not directly lead to a characterization of the forbidden structures for terminal planar networks.

In this section, we introduce new supergraphs different from the $t$-completion and provide a new characterization of terminal planar directed  phylogenetic networks by focusing on the planarity of such supergraphs (\upd{Theorem~\ref{lem:N_completion}}).

\begin{definition}\label{dfn:st-completions}
Let $N = (V, A)$ be a directed phylogenetic network with source $s$ and sink set $\{t_1,\dots, t_n\}$. Let $t$ be a new vertex not contained in $V$. Two supergraphs of $N$, denoted by $N^\ast$ and $N^x$,  are defined as follows:
\begin{enumerate}
    \item $N^\ast$ is the acyclic $st$-digraph with $V(N^\ast) := V \cup \{t\}$ and $A(N^\ast) := A \cup \{(t_1, t), \dots, (t_n, t)\} \cup \{(s, t)\}$. We call $N^\ast$ the \emph{$st$-completion} of $N$.
    \item $N^x$ is \upd{the} acyclic digraph (without multiple edges) with $V(N^x) := V \cup \{t\}$ and  $A(N^x) := A \cup \{(\tilde{t_1}, t), \dots, (\tilde{t_n}, t)\} \cup \{(\tilde{s}, t)\}$, where $\tilde{s}$ denotes the unique child of $s$ in $N$, and for each $i \in [1, n]$, $\tilde{t_i}$ denotes the unique parent of $t_i$ in $N$. We call $N^x$ the \emph{terminal cut-completion} of $N$.
\end{enumerate}
\end{definition}
\upd{
\begin{theorem}\label{lem:N_completion}
For any directed phylogenetic network $N$, the following statements are equivalent. 
\begin{enumerate}
    \item $N$ is terminal planar.
    \item The $st$-completion $N^\ast$ of $N$ is planar.
    \item The terminal cut-completion $N^x$ of $N$ is planar.
\end{enumerate}
\end{theorem}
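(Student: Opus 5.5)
The plan is to prove the cycle (1) $\Rightarrow$ (2) $\Rightarrow$ (3) $\Rightarrow$ (1), working throughout with underlying undirected graphs, since planarity of a digraph is by definition planarity of its underlying graph. Directions play no role except that $s$, $t_1,\dots,t_n$ are exactly the degree-one vertices of $N_u$ and $\tilde s,\tilde t_i$ are their unique neighbours.

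For (1) $\Rightarrow$ (2), I would take a planar drawing of $N$ in which $s,t_1,\dots,t_n$ all lie on the external face. I would place the new vertex $t$ in the external face and join it by pairwise non-crossing curves to each of $s,t_1,\dots,t_n$. This is the standard fact that one can add a vertex inside a face adjacent to any set of vertices on that face's boundary. To make it rigorous I would map the external face to the interior of a disk via inversion, or simply note that the boundary walk of the external face visits these vertices, and draw the curves radially from a point far outside. The result is a planar drawing of $N^\ast_u$.

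For (2) $\Rightarrow$ (3), I would observe that $N^x_u$ is a minor of $N^\ast_u$. Contracting each pendant edge $\{s,\tilde s\}$ and $\{t_i,\tilde t_i\}$ in $N^\ast_u$ turns the edges $\{s,t\}$ and $\{t_i,t\}$ into $\{\tilde s,t\}$ and $\{\tilde t_i,t\}$. The only remaining edges are those of $N_u$ minus the terminals, and multiple edges are simplified, as in the definition of $N^x$. Since planarity is closed under contraction (Theorem~\ref{thm:wagneri}, or directly from a drawing), $N^x_u$ is planar. The degenerate case where $\tilde s$ or some $\tilde t_i$ is itself a terminal, e.g.\ $N$ is a single arc $s\to t_1$, is trivial and I would handle it separately.

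For (3) $\Rightarrow$ (1), I would start from a planar drawing of $N^x_u$ and re-embed it so that $t$ lies on the external face; this is always possible by choosing a face incident to $t$ as the external one (stereographic projection). Next I would delete $t$. All its incident edges go to $\tilde s,\tilde t_1,\dots$, and the faces around $t$ merge with the external face, so every neighbour of $t$ now lies on the external face. Finally I would reattach each pendant vertex $s$ (at $\tilde s$) and each $t_i$ (at $\tilde t_i$) by short edges drawn inside the external face. Pendant edges drawn from an external-face vertex into that face never create crossings, and the new leaves lie on the external face. Several leaves sharing the same parent $\tilde t_i$ (the multiple-edge case) cause no problem.

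The main obstacle is the rigorous justification of the face manipulations in (3) $\Rightarrow$ (1): the claim that after deleting $t$ every former neighbour of $t$ is on the boundary of the external face. I would argue this by noting that each edge $\{t,v\}$ lies on the boundary of some face incident to $t$. In the drawing with $t$ on the external face, deleting $t$ and its edges unites all faces around $t$ into one region containing the old external face. That region is unbounded, so it is the new external face, and each $v$ lies on its boundary. The step in (1) $\Rightarrow$ (2) is similar but easier.
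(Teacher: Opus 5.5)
Your argument is correct in substance, but for (1)$\Leftrightarrow$(2) it takes a different route from the paper.

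\textbf{Comparison with the paper.} The paper gets (1)$\Leftrightarrow$(2) from Moulton--Wu's Theorem~\ref{thm:ptpu} ($N$ is terminal planar iff $N^+$ is upward planar) and the $st$-digraph characterization of upward planarity (Theorem~\ref{thm:up_pst}). A planar $st$-digraph containing $N^+$ must contain the arc $(s,t)$, hence $N^\ast$; conversely, a planar $N^\ast$ is itself such an $st$-digraph.

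You instead work directly with drawings. You add an apex $t$ in the outer face; conversely, you put $t$ on the outer face and delete it. Your argument that the faces around $t$ merge into the new outer face, with every former neighbour of $t$ on its boundary, is sound.

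For (2)$\Leftrightarrow$(3) the two proofs are essentially the same. Contracting the pendant edges of $N^\ast_u$ is the paper's smoothing of its degree-$2$ terminals. Where the paper transfers $K_5$/$K_{3,3}$ subdivisions via Kuratowski, you use minor-closedness.

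Your route is elementary and self-contained, and it never uses the orientation. So it proves the equivalence for any connected graph with designated degree-$1$ terminals, and hence directly for undirected networks. Combined with Theorem~\ref{thm:up_pst}, it recovers Theorem~\ref{thm:ptpu} instead of relying on it. The paper's route is shorter given the cited results and keeps the link to upward planarity.

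\textbf{A slip to fix.} By definition $V(N^x)=V\cup\{t\}$, so $N^x$ still contains $s,t_1,\dots,t_n$ and their pendant arcs. Hence $N^x_u$ is not in general a minor of $N^\ast_u$. Both graphs have $|V|+1$ vertices, so $N^x_u$ would have to be a spanning subgraph of $N^\ast_u$. Yet for the network with arcs $s\to a$, $a\to b$, $a\to c$, $b\to c$, $b\to t_1$, $c\to t_2$, the graph $N^x_u$ has vertices of degree $4$, while $N^\ast_u$ has maximum degree $3$.

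What your contraction actually yields is $N^x_u-\{s,t_1,\dots,t_n\}$. Add the one-line remark that attaching pendant vertices preserves planarity.

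The same issue appears in (3)$\Rightarrow$(1). The terminals are already present in the drawing of $N^x_u$ and may lie in inner faces. Delete them first (i.e.\ start from the planar subgraph $N^x_u-\{s,t_1,\dots,t_n\}$) before redrawing them in the outer face.
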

}
\begin{proof}
First, we prove the equivalence of 1 and 2. Suppose $N$ is terminal planar. By Theorem~\ref{thm:ptpu}, its $t$-completion $N^+$ is upward planar. Theorem~\ref{thm:up_pst} then ensures the existence of an acyclic planar $st$-digraph $D$ for which $N^+$ is a spanning subgraph.
To see that $N^\ast$ is a subgraph of $D$, consider the following. By Definition~\ref{dfn:st-completions}, $V(N^\ast) = V(N^+) = V(D)$ and  $A(N^\ast) = A(N^+) \cup \{(s, t)\}$ hold. Since $(s, t) \in A(D)$ by Definition~\ref{dfn:st-digraph}, together with $A(N^+) \subseteq A(D)$, it follows that $A(N^\ast) \subseteq A(D)$. Therefore, $N^\ast$ is a subgraph of $D$, and thus $N^\ast$ is planar.
Conversely, if $N^\ast$ is planar, it is a spanning subgraph of itself, so by Theorem~\ref{thm:up_pst}, $N^\ast$ is upward planar. Since by definition $N^+ \subseteq N^\ast$, $N^+$ is also upward planar, and by Theorem~\ref{thm:ptpu}, $N$ is terminal planar.

Next, we show the equivalence of 2 and 3. By Theorem~\ref{thm:kuratowski}, it suffices to show that the underlying graph $N^\ast_u$  of $N^\ast$ contains a subgraph homeomorphic to $K_5$ or $K_{3,3}$ if and only if $N^x_u$ does as well.
Note that in $N^\ast_u$, the source $s$ and each sink $t_i$ have degree-2. Let $\mathrm{sm}(N^\ast_u)$ denote the simple graph obtained by smoothing all terminal vertices $s, t_1, \dots, t_n$ of $N^\ast_u$. Then, we can see that $N^x_u - \{s, t_1, \dots, t_n\}$ is isomorphic to $\mathrm{sm}(N^\ast_u)$. Thus, if $N^\ast_u$ has a subgraph homeomorphic to  $K_5$, then $N^x_u$ also has the same property. 
Conversely, if $N^x_u$ contains a subgraph $G$ that is homeomorphic to $K_5$, then $G \subseteq N^x_u - \{s, t_1, \dots, t_n\}$ since $K_5$ has no vertex of degree one. Thus,  $\mathrm{sm}(N^\ast_u)$ also contains $G$ as a subgraph, which implies that $N^\ast_u$ contains a subgraph homeomorphic to $K_5$. The same argument applies for $K_{3,3}$. Thus, the equivalence of 2 and 3 is established.
\end{proof}

We note that the notion of terminal planarity is related to bar-visibility graphs and digraphs \cite{FENG2025342}, which is sometimes called $\epsilon$-visibility \cite{TamassiaTollis1986} or left-visibility \cite{Wismath1985}. It is known that a (possibly multi-source and multi-sink) digraph $D$ is a bar-visibility digraph if and only if $D^\prime$ is an acyclic planar $st$-digraph \cite{TamassiaTollis1986,Wismath1985,Wismath_1989}, where $D^\prime$ is a supergraph obtained from $D$ by introducing a unique source $s$, a unique sink $t$ and the arc $(s,t)$. In our setting, if $D$ is a directed phylogenetic network $N$, then $D^\prime$ can be seen as essentially the same as the $st$-completion $N^\ast$, except for the difference that $N^\ast$ has an arc $(s,\tilde{s})$ while $D^\prime$ has a directed path of length $2$ from $s$ to $\tilde{s}$.  Then, by Theorem~\ref{lem:N_completion}, we can see that a directed phylogenetic network $N$ is terminal planar if and only if $N$ is a bar-visibility digraph. The interested reader is referred to \cite{Wismath1985}.

\section{Characterization of Terminal Planar Networks by Forbidden Subgraphs}\label{sec:characterization.forbidden}

In Sections~\ref{sec:terminal.planar.literature} and \ref{sec:characterization.transformation}, we discussed the necessary and sufficient conditions for a directed phylogenetic network to be terminal planar, focusing on the upward planarity or planarity of their supergraphs.  Hereafter, we will consider the forbidden subgraphs that characterize terminal planarity in directed and undirected phylogenetic networks.
The main result of this paper, Theorem~\ref{thm:main} is an analogue of Theorem~\ref{thm:kuratowski}. 
To state Theorem \ref{thm:main}, we first introduce the concepts of \emph{$0/1$-labeled graphs} and \emph{cut-labeled graphs}, along with basic graph operations defined on such graphs.

\subsection{$0/1$-Labeled Graphs}
A \emph{$0/1$-labeled graph} is a pair $(G, g)$, where $G = (V, E)$ is a connected undirected graph and $g: V \cup E \to \{0, 1\}$ is a function. In this setting, $(G, g)$ is called a \emph{$0/1$-labeled graph of $G$ determined by $g$}, and $g$ is referred to as the \emph{labeling function} of $G$. Each vertex $v \in V$ and each edge $e \in E$ are called a \emph{vertex} and an \emph{edge} of $(G, g)$, respectively. For each element $x \in V \cup E$, we say $x$ has \emph{label 1} if $g(x) = 1$, and \emph{label 0} otherwise.

For two $0/1$-labeled graphs $(F, f)$ and $(G, g)$, we say that $(F,f)$ and $(G,g)$ are \emph{label-preserving isomorphic} if $F \simeq G$ and each vertex (resp.\ edge) of $F$ has the same label of its corresponding vertex (resp.\ edge) of $G$.  
If $F \subseteq G$ and $f$ is the restriction of $g$ to $V(F) \cup E(F)$, then $(F, f)$ is called a \emph{label-preserving subgraph} of $(G, g)$, in which case we write $(F, f) \subseteqq (G, g)$ and often represent $(F,f)$ by $(F, g|_F)$. 
If $G \subseteq F$ and $f$ is an extension of $g$ to $V(F) \cup E(F)$, then $(F, f)$ is called a \emph{label-preserving supergraph} of $(G, g)$, denoted by $(G, g) \subseteqq (F, f)$.

\begin{definition}\label{dfn:label.smoothing}
A vertex $v$ of a $0/1$-labeled graph $(G,g)$ is called \emph{label-preserving smoothable} if $\deg_G(v) = 2$ and the two edges incident to $v$, $e_1 = \{u_1,v\}$ and $e_2 = \{u_2,v\}$, satisfy $g(e_1) = g(e_2)$. For a label-preserving smoothable vertex $v$ of $(G,g)$, \emph{label-preserving smoothing of vertex $v$} is defined as obtaining a $0/1$-labeled graph $(H,h)$ from $(G,g)$ as follows:
\begin{itemize}
\item $V(H) := V(G) \setminus \{v\}$, \quad $E(H) := (E(G) \setminus \{e_1, e_2\}) \cup \{\{u_1,u_2\}\}$
\item For each $x \in V(H) \cup E(H)$, define 
$
h(x) := \begin{cases}
g(e_1) & \text{if } x = \{u_1,u_2\}, \\
g(x) & \text{otherwise}.
\end{cases}
$
\end{itemize}
Moreover, the $0/1$-labeled graph obtained by label-preserving smoothing all possible vertices of $(G,g)$ is called the \emph{label-preserving smoothed graph} of $(G,g)$.
\end{definition}

\begin{definition}\label{dfn:label.subdiv}
For an edge $\{u,v\}$ of a $0/1$-labeled graph $(G,g)$, \emph{label-preserving subdivision} of the edge $\{u,v\}$ is defined as obtaining a $0/1$-labeled graph $(H,h)$ from $(G,g)$. The graph $H$ is obtained by replacing the edge $\{u,v\}$ of $G$ with a path $[u,v]$ of length at least one. The labeling function $h$ of $H$ is defined as follows:
\begin{itemize}
    \item For each edge or internal vertex $x$ (with $x \neq u,v$) on the path $[u,v]$ in $H$ corresponding to the edge $\{u,v\}$, set $h(x) := g(\{u,v\})$.
    \item For each $x \in V(G) \cup E(G)$, set $h(x) := g(x)$.
\end{itemize}
Given two $0/1$-labeled graphs $(G,g)$ and $(H,h)$, if $(H,h)$ can be obtained from $(G,g)$ by applying a finite sequence of label-preserving edge subdivisions, then $(H,h)$ is called a \emph{label-preserving subdivision} of $(G,g)$.
\end{definition}

\begin{definition}\label{dfn:label.homeomorphism}
Given two $0/1$-labeled graphs $(G,g)$ and $(H,h)$, let $(G^{\prime}, g^{\prime})$ and $(H^{\prime}, h^{\prime})$ be $0/1$-labeled graphs obtained from $(G,g)$ and $(H,h)$, respectively, by finite sequences of label-preserving edge subdivisions and label-preserving vertex smoothings. If $(G^{\prime}, g^{\prime}) \cong (H^{\prime}, h^{\prime})$, then $(G,g)$ and $(H,h)$ are said to be \emph{label-preserving homeomorphic}. We write $(G,g) \approxeq (H,h)$.
\end{definition}

Vertex deletion and edge deletion in $0/1$-labeled graphs are defined analogously to those in unlabeled graphs.
For an edge $\{u,v\}$ of a $0/1$-labeled graph $(G,g)$, the \emph{label-preserving contraction} of the edge $\{u,v\}$ is defined as obtaining a $0/1$-labeled graph $(F,f)$ from $(G,g)$ as follows: $F$ is the graph obtained by contracting the edge $\{u,v\}$ in $G$. The label of the vertex $w \in V(F)$, created by identifying $u$ and $v$, is given by  
$f(w) := \max\{ g(u), g(v) \}$,
while the labels of all other vertices and edges in $F$, except for $w$, are the same as those of their corresponding elements in $G$.

\subsection{Cut-Labeled Graphs}
For a connected undirected graph $G = (V, E)$, the $0/1$-labeled graph $(G, \ell)$ with the labeling function $\ell: V \cup E \rightarrow \{0, 1\}$ defined in equation~\eqref{eq:cut} is called the \emph{cut-labeled graph} of $G$ and is written as $L(G)$.
\begin{equation}\label{eq:cut}
\ell(x) := 
\begin{cases}
1 & \text{if } x \in V \cup E \text{ is a cut vertex or cut edge of } G, \\
0 & \text{otherwise.}
\end{cases}
\end{equation}

\begin{definition}\label{dfn:cut_completion}
Let $N = (V, A)$ be a directed phylogenetic network with a source $s$ and a sink set $\{t_1, \dots, t_n\}$. The \emph{cut-completion} of $N$, denoted by $N^c$, is the acyclic digraph defined by 
$V(N^c) := V \cup \{t\}, \quad A(N^c) := A \cup \{(v_1, t), \dots, (v_k, t)\}$,
where $\{v_1, \dots, v_k\}$ is the set of all cut vertices of the underlying graph $N_u$ of $N$. 
Let $L(N_u)$ denote the cut-labeled graph of the underlying graph $N_u = (V, E)$ of $N$. Let $N_u^c = (V^c, E^c)$ be the underlying graph of the cut-completion $N^c$.
The $0/1$-labeled graph $L^c(N_u) := (N_u^c, \ell^c)$ is called the \emph{label-preserving cut-completion} of $L(N_u)$, where $\ell^c$ is a function on $V^c \cup E^c$ defined by
\begin{equation}\label{eq:label.cut}
\ell^c(x) := 
\begin{cases}
\ell(x) & \text{if } x \in V \cup E, \\
0 & \text{otherwise.}
\end{cases}
\end{equation}
\end{definition}

In Proposition~\ref{prop:relation_completion}, recall that the $st$-completion $N^*$ is defined in Definition~\ref{dfn:st-completions} and the terminal cut-completion $N^x$ in Definition~\ref{dfn:cut_completion}.

\begin{proposition}\label{prop:relation_completion}
For any directed phylogenetic network $N$, if the cut-completion $N^c$ is planar, then both $N^x$ and $N^\ast$ are planar. 
\end{proposition}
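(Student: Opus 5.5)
The plan is to show that the terminal cut-completion $N^x$ is a subgraph of the cut-completion $N^c$. Planarity is inherited by subgraphs, so $N^x$ is then planar. Theorem~\ref{lem:N_completion} (equivalence of statements 2 and 3) then gives that $N^\ast$ is planar as well. So the whole argument reduces to comparing the arc sets, since $V(N^x)=V(N^c)=V\cup\{t\}$.

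By Definitions~\ref{dfn:st-completions} and~\ref{dfn:cut_completion}, $A(N^x)=A\cup\{(\tilde s,t)\}\cup\{(\tilde t_i,t): 1\le i\le n\}$ and $A(N^c)=A\cup\{(v,t): v \text{ a cut vertex of } N_u\}$. It therefore suffices to prove that $\tilde s$ and every $\tilde t_i$ are cut vertices of $N_u$. This is the key (and essentially only) step.

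Consider first the degenerate case where $N$ is the single arc $(s,t_1)$. Then $\tilde s=t_1$ and $\tilde t_1=s$, and neither is a cut vertex. However, $N^x_u$ is then just a triangle on $\{s,t_1,t\}$ and is trivially planar. (Equally, $N^\ast_u$ is the same triangle, so both conclusions hold directly.)

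In every other case, $\tilde s$ is not a sink. Indeed, if $\tilde s$ were a sink $t_i$, then $\mathrm{indeg}(t_i)=1$ and connectivity would force $V=\{s,t_i\}$, which is the degenerate case. So $\tilde s$ has out-degree at least one as well as the in-arc from $s$. Hence $\tilde s$ has degree at least $2$ in $N_u$ and is adjacent to the leaf $s$. Deleting $\tilde s$ isolates $s$ from the remaining vertices, so $\tilde s$ is a cut vertex.

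Symmetrically, $\tilde t_i$ cannot be the root $s$. Otherwise $s$ has out-degree $1$ and $t_i$ has in-degree $1$, so connectivity again forces the degenerate case. Nor can $\tilde t_i$ be a sink, since it has an out-arc to $t_i$. So $\tilde t_i$ has degree at least $2$ and is adjacent to the leaf $t_i$, and it is therefore a cut vertex of $N_u$.

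Putting this together, each new arc of $N^x$ is an arc of $N^c$. Hence $N^x\subseteq N^c$ and $N^x$ is planar. Theorem~\ref{lem:N_completion} then yields planarity of $N^\ast$.

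I expect no real obstacle here. The only care needed is the degenerate two-vertex network, where the parent and child of the terminals are themselves terminals and are not cut vertices.
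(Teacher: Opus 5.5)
Your proof is correct and follows the paper's route: $N^x \subseteq N^c$, planarity passes to subgraphs, and Theorem~\ref{lem:N_completion} carries planarity from $N^x$ to $N^\ast$. You also justify the inclusion by showing that $\tilde s$ and each $\tilde t_i$ are cut vertices of $N_u$, and you correctly flag the single-arc network, where the inclusion actually fails but the conclusion holds trivially. The paper's one-line proof passes over both of these details.
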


\begin{proof}
Since $N^x \subseteq N^c$, the planarity of $N^c$ immediately implies that of $N^x$. Furthermore, if $N^x$ is planar, then by \upd{Theorem~\ref{lem:N_completion}}, the graph $N^\ast$ is also planar. This completes the proof.
\end{proof}

\begin{figure}[htbp]
\centering
\includegraphics[width=0.95\textwidth]{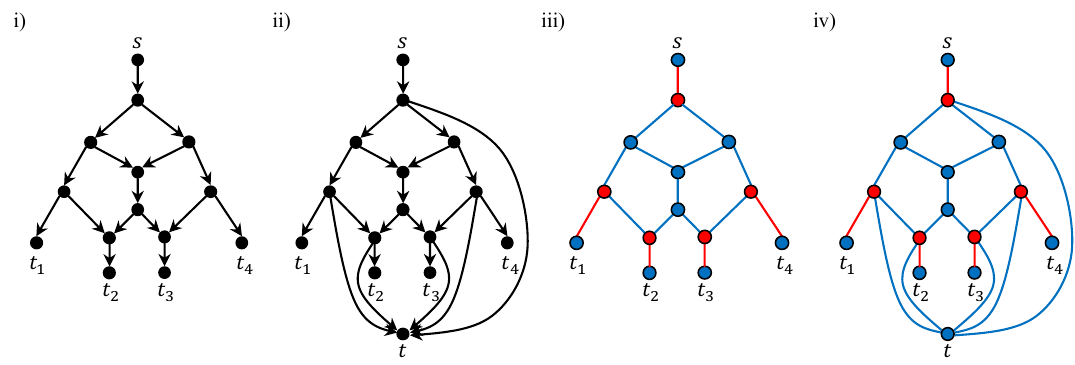}
\caption{Illustration of cut-labeled graphs and Definition~\ref{dfn:cut_completion}. i) A directed phylogenetic network $N$ with source $s$ and sink set $\{t_1, t_2, t_3,t_4\}$. ii) The cut-completion $N^c$ of $N$. iii) The cut-labeled graph $L(N_u)$ of $N$. iv) The label-preserving cut-completion $L^c(N_u)$ of $L(N_u)$.
\label{fig:cut.completion}
}
\end{figure}

\subsection{$\mathcal{H}_i$ Structures}

Let $K_{3,3}^{-}$ denote the graph obtained by removing an arbitrary edge from $K_{3,3}$. 
Let $K_{3,3}^{(-e)}$ denote the graph obtained by subdividing an arbitrary edge of $K_{3,3}$ exactly twice and then removing the edge between the two vertices created by this subdivision. 
Let $K_{3,3}^{(-v)}$ denote the graph obtained by subdividing each edge of $K_{3,3}$ at least once, removing an arbitrary vertex of $K_{3,3}$, and then smoothing all smoothable vertices.
The graphs $K_5^{-}$, $K_5^{(-e)}$, and $K_5^{(-v)}$ are defined analogously by replacing $K_{3,3}$ with $K_5$ in the above definitions.

\begin{definition}\label{dfn:graph.family}
For each $i \in [1,6]$, we define the undirected graph $H_i$ by 
$H_1 \simeq K_{3,3},\ H_2 \simeq K_{3,3}^{(-v)},\ H_3 \simeq K_{3,3}^{(-e)},\ H_4 \simeq K_5,\ H_5 \simeq K_5^{(-v)},\ H_6 \simeq K_5^{(-e)}$.
An \emph{$\mathcal{H}_i$ graph} is any $0/1$-labeled graph obtained from a $0/1$-labeled graph $(H_i, h_i)$ satisfying the following conditions by label-preserving contraction of zero or more pendant edges:

\begin{enumerate}\addtolength{\itemsep}{1.5ex}
\item $\forall v \in V(H_i)$, 
$h_i(v) := \begin{cases}
1 & \text{if } \deg_{H_i}(v) = 1 \\
0 & \text{if } \exists x \in \mathrm{Adj}_{H_i}(v) \text{ with } \deg_{H_i}(x) = 1
\end{cases}$

\item $\forall e \in E(H_i)$, $h_i(e) := 0$.
\end{enumerate}
\end{definition}

The number of $\mathcal{H}_i$ graphs in Definition \ref{dfn:graph.family} is easily determined. For example, the number of $\mathcal{H}_5$ graphs equals $5$, since by symmetry it equals the number of distinct ways in which pendant edges can be contracted. 
Similarly, it is easy to verify that there are $12$ $\mathcal{H}_2$ graphs.

We also note that it is possible that a 0/1-labeled graph has an $\mathcal{H}_i$ graph and \upd{an} $\mathcal{H}_j$ graph  ($i\neq j$) as its \upd{label-preserving} subgraphs. 
For example, if a graph contains an $\mathcal{H}_1$ graph \upd{where all vertices have label $1$}, then it \upd{contains} an $\mathcal{H}_2$ graph that is obtained by \upd{the label-preserving contraction of the three} pendant edges of $(H_2, h_2)$ in Figure \ref{fig:forbidden}.

\begin{figure}[htbp]
\centering
\includegraphics[width=.9\textwidth]{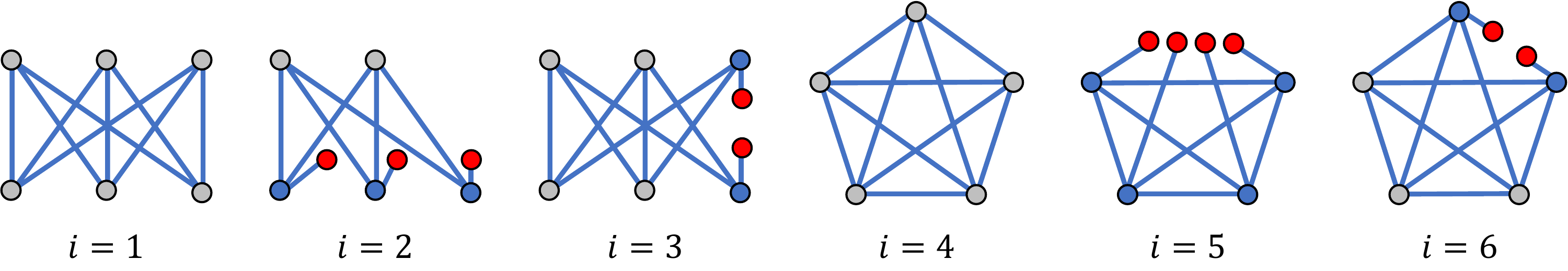}
\caption{
Illustration of the family of $(H_i, h_i)$ satisfying the conditions in Definition~\ref{dfn:graph.family}. Vertices labeled 1 are shown in red; vertices with unspecified labels are shown in gray. All edges are labeled 0 and shown in blue.
\label{fig:forbidden}
}
\end{figure}

\begin{definition}\label{def:labeled_homeomorphism}
Let $(G,g)$ be a $0/1$-labeled graph. Suppose there exist $i \in [1,6]$ and a label-preserving subgraph $(F, g|_F)$ of $(G, g)$ such that $(F, g|_F)$ is label-preserving homeomorphic to an $\mathcal{H}_i$ graph. Then $(G, g)$ is said to \emph{contain} an $\mathcal{H}_i$ structure, and $(F, g|_F)$ is called an \emph{$\mathcal{H}_i$ structure} of $(G, g)$. If no such structure exists, $(G, g)$ is said to \emph{not contain} an $\mathcal{H}_i$ structure.
An $\mathcal{H}_i$ structure $(F, g|_F)$ of $(G, g)$ is called \emph{minimal} if there exists no proper label-preserving subgraph $(F^\prime, g|_{F^\prime})$ of $(G,g)$ with $F^\prime \subsetneq F$ that is also an $\mathcal{H}_i$ structure of $(G, g)$.
\end{definition}

\begin{proposition}\label{prop.map.phi}
Let $(G,g)$ be a $0/1$-labeled graph, let $i \in [1,6]$ be arbitrary, and let $(H,h)$ be an $\mathcal{H}_i$ graph. Suppose that $(G,g)$ contains an $\mathcal{H}_i$ structure $(F, g|_F)$ that is label-preserving homeomorphic to $(H,h)$. Then there exists an injection $\psi \colon V(H) \to V(F)$ such that $(H,h)$ is label-preserving isomorphic to the graph obtained from $(F, g|_F)$ by smoothing all vertices outside the image $\{\psi(v) \mid v \in V(H)\}$.
\end{proposition}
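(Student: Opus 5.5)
The plan is to use the definition of label-preserving homeomorphism directly. We then show that the two sequences of operations it provides can be normalized so that only smoothings are applied to $(F,g|_F)$ and nothing needs to be done to $(H,h)$. Since $(F,g|_F)\approxeq(H,h)$, Definition~\ref{dfn:label.homeomorphism} gives sequences of label-preserving subdivisions and smoothings. These transform $(F,g|_F)$ and $(H,h)$ into label-preserving isomorphic graphs $(F',f')$ and $(H',h')$. The first step is to replace both by their label-preserving smoothed graphs $\widehat{F}$ and $\widehat{H}$. The key observation is that a label-preserving subdivision is undone by label-preserving smoothing: every new internal vertex has degree $2$, and its two incident edges carry the same label. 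Conversely, a label-preserving smoothing is undone by a subdivision that restores the original labels. Hence label-preserving homeomorphism classes are unions of "subdivision classes". The smoothed graph is a canonical representative, provided we check that smoothing all smoothable vertices gives a well-defined result up to label-preserving isomorphism.

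The second step is to argue that $(H,h)$ is already label-preserving smoothed, i.e.\ $\widehat{H}=(H,h)$. I would check the $\mathcal{H}_i$ graphs of Definition~\ref{dfn:graph.family} one by one. In $H_1,H_4$ every vertex has degree $\ge3$. In $H_2,H_3,H_5,H_6$ the only degree-$2$ vertices are those created in the $(-e)$ and $(-v)$ constructions adjacent to pendant leaves, and contractions of pendant edges only lower leaf counts. The $(-v)$ construction explicitly smooths all smoothable vertices. The remaining degree-$2$ vertices are the neighbors of leaves in $K^{(-e)}$, which have one pendant edge and one original edge, both labeled $0$. Here there is a subtle point. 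Such a vertex is label-preserving smoothable by Definition~\ref{dfn:label.smoothing}, since only edge labels are compared. I would resolve this by noting that smoothing it would delete a vertex, yet the pendant leaf remains of degree $1$. So the vertex set of $(H,h)$ survives up to these suppressed degree-$2$ vertices. The cleaner route is to work with the subset $V_{\ge}\subseteq V(H)$ of vertices that are not smoothable, together with the map on them, and then extend.

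The third step builds $\psi$. Let $\widehat{F}$ be obtained from $F$ by smoothing, and compose the isomorphism $\widehat{H}\cong\widehat{F}$ with the inclusion $V(\widehat{F})\subseteq V(F)$. This gives an injection on the non-smoothed vertices of $H$. For each smoothable vertex $v$ of $H$, lying on an edge-path between non-smoothable vertices $a,b$, the corresponding $\psi(a)$-$\psi(b)$ path in $F$ has length at least the length of the path in $H$. This holds because $F$ is a subdivision of the smoothed graph. So we can assign the smoothable vertices of $H$ injectively to internal vertices of that path in order. Then we smooth in $F$ exactly the vertices outside the image of $\psi$. All of these have degree $2$ with equal incident edge labels, because they lie inside paths all of whose edges carry the single label of the corresponding edge of $\widehat{H}$. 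The result is label-preserving isomorphic to $(H,h)$, with vertex labels matching since the image vertices' labels are those from the isomorphism, or $0$ on paths.

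The main obstacle is this label-matching on the interior vertices chosen for $\psi$. A vertex inside a path need not carry the same label as the vertex of $H$ it replaces, because subdivision assigns internal vertices the edge label. I would handle this using that $H$'s degree-$2$ vertices are adjacent to leaves and labeled $0$, while the edges are all $0$. Subdivision vertices of $F$ on label-$0$ paths therefore have label $0$, which is exactly what is required.
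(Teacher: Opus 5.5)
There is a genuine gap. It sits exactly at the step you call the main obstacle, and it comes from misidentifying the degree-$2$ vertices of $\mathcal{H}_i$ graphs. None of $H_1,\dots,H_6$ has a vertex of degree $2$. In $K_{3,3}^{(-e)}$ and $K_5^{(-e)}$ the neighbors of the leaves are the endpoints of the subdivided edge, and they keep degree $3$ resp.\ $4$ (the pendant edge plus two resp.\ three original edges, not one). The $(-v)$ graphs have no degree-$2$ vertices after smoothing. Degree-$2$ vertices arise only from the pendant-edge contractions you dismissed, and only for $i\in\{2,3\}$. Contracting a pendant edge of $H_2$ or $H_3$ merges a leaf (label $1$) with its degree-$3$ neighbor (label $0$) into a degree-$2$ vertex with label $\max\{1,0\}=1$, and that vertex is not adjacent to any leaf. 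So the vertices you must place carry label $1$, and your last sentence has the requirement backwards. An interior vertex of a $0$-labeled path of $F$ created by subdivision has label $0$ and can never serve as $\psi(v)$. An arbitrary interior vertex of $F$ need not have label $1$ either. Separately, your claim that the $\psi(a)$-$\psi(b)$ path of $F$ is at least as long as the corresponding path of $H$ does not follow from $F$ being a subdivision of $\widehat F$.

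The missing idea is the paper's short observation. All edges of $(H,h)$, and hence of $(F,g|_F)$, have label $0$, so any vertex created by label-preserving subdivision when passing from $F$ to $H$ has label $0$. Every degree-$2$ vertex of $H$ has label $1$, so no vertex of $H$ can be such a created vertex. Hence every vertex of $H$ already corresponds to a vertex of $F$, and $H$ arises from $F$ by smoothings alone. In particular, the label-$1$ degree-$2$ vertices must be kept among the unsmoothed vertices throughout; they cannot be smoothed into $\widehat H$ and re-inserted later.

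Your remark that Definition~\ref{dfn:label.smoothing} ignores the label of the smoothed vertex is apt, since it concerns precisely these contracted vertices. But it also shows why your route cannot work. Once such a vertex is smoothed, the information is lost, and the smoothing is not undone by a label-preserving subdivision, contrary to your Step 1. For example, $K_{3,3}^{-}$ with its two degree-$2$ vertices labeled $1$ and all other labels $0$ is an $\mathcal{H}_3$ graph whose smoothed graph is $K_4$ with all labels $0$. Under the literal wording, that $K_4$ would be label-preserving homeomorphic to it, yet no injection of six vertices into four exists. The argument therefore needs smoothing to be used only as the inverse of label-preserving subdivision. This is how the paper's proof tacitly reads the relation when it says that a vertex of $H$ with no counterpart in $F$ must have arisen by subdivision.
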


\begin{proof}
The proposition states that $(F, g|_F)$ can be transformed into $(H, h)$ by label-preserving smoothing of finitely many vertices in $(F, g|_F)$, without subdividing any edges.
If $(H, h)$ contains no vertex $v$ with $\deg_H(v) = 2$, then there is no need to create new degree-2 vertices when transforming $(F, g|_F)$ into $(H, h)$, so indeed no edge subdivision is required, and $(F, g|_F)$ can be transformed into $(H, h)$ using only label-preserving smoothings.
Let $S \subseteq V(F)$ denote the set of vertices that are not smoothed in the transformation from $(F, g|_F)$ to $(H, h)$.
Then there exists a bijection $\tau$ from $S$ to $V(H)$, and $\psi := \tau^{-1}$ gives the desired injection.
Indeed, the image of $\psi$ coincides with $S$, and all vertices outside $S$ are smoothed, so the conditions of the proposition are satisfied.

Next, suppose $(H, h)$ has a vertex $v$ with $\deg_H(v) = 2$. 
In this case, $i \in \{2, 3\}$ and $(H, h)$ is an $\mathcal{H}_i$ graph obtained by label-preserving contraction of one or more pendant edges of $(H_i, h_i)$ (as in Definition~\ref{dfn:graph.family}), where $v$ is the contracted vertex where the ends of a pendant edge have been identified.
By Definition~\ref{dfn:graph.family}, each edge of $(H, h)$ has label $0$, and by assumption, $(F, g|_F) \approxeq (H, h)$. Note that by Definitions~\ref{dfn:label.smoothing} and ~\ref{dfn:label.subdiv},  if a $0/1$-labeled graph has an edge labeled $1$, then any graph label-preserving homeomorphic to it also has an edge labeled $1$.
Thus, every edge of $(F, g|_F)$ has label $0$, and applying any sequence of label-preserving subdivisions or smoothings to $(F, g|_F)$ yields graphs in which every edge also has label $0$.
Moreover, by Definition~\ref{dfn:label.subdiv}, every degree-2 vertex obtained by label-preserving subdivision of an edge labeled $0$ also has label $0$.
Suppose for contradiction that for some $v$ with $\deg_H(v) = 2$, there is no corresponding vertex in $(F, g|_F)$. This means that $v$ arises as a result of a label-preserving subdivision of $(F, g|_F)$ during the process of transforming from $(F, g|_F)$ to $(H, h)$. Then $h(v) = 0$ must hold. 
However, by the construction in Definition~\ref{dfn:graph.family}, such a vertex $v$ is obtained by label-preserving contraction of a pendant edge in $(H_i, h_i)$, which implies $h(v) = 1$. 
This is a contradiction. Therefore, for every vertex $v$ of $(H,h)$ with $\deg_H(v) = 2$, there exists a vertex of $(F, g|_F)$ that corresponds to $v$.
Consequently, as in the previous case, if $S \subseteq V(F)$ is the set of vertices not smoothed in the transformation from $(F, g|_F)$ to $(H, h)$, then the inverse $\psi := \tau^{-1}$ of the bijection $\tau: S \to V(H)$ provides the desired injection.
\end{proof}

The injection $\psi$ constructed in Proposition~\ref{prop.map.phi} is called an \emph{$\mathcal{H}_i$ structure mapping}. In general, $\psi$ is not unique. For example, since an $\mathcal{H}_1$ graph has no vertices with specified $0/1$ labels and all edges are labeled $0$, the mapping from vertices of the $\mathcal{H}_1$ graph to vertices of the $\mathcal{H}_i$ structure is not uniquely determined.
\subsection{Forbidden Structures for Terminal Planar Networks: Proof of Necessity}

\begin{proposition}\label{prop:cut.vertex.cycle}
Let $G$ be an undirected graph containing a path $P = u, \{u, v\}, v, \{v, w\}, w$ of length 2. If $v$ is not a cut vertex of $G$ and neither $\{u, v\}$ nor $\{v, w\}$ is a cut edge of $G$, then $G$ contains a cycle that includes $P$.
\end{proposition}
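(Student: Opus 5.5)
We need a $u$-$w$ path in $G$ avoiding $v$; together with $P$ it then forms a cycle through $P$. The plan is to produce such a path directly from the fact that $v$ is not a cut vertex. The hypothesis on the two edges is used only to rule out degenerate situations; it guarantees, for instance, that $u$ and $w$ are not leaves hanging off $v$.

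First, $u\neq w$, because $P$ is a path and its vertices are distinct. Consider the graph $G-v$. Since $v$ is not a cut vertex of $G$, the graph $G-v$ is connected, provided $G$ itself is connected. The paper works with connected graphs throughout, and in any case we may restrict attention to the connected component of $G$ containing $P$; cut vertices and cut edges are unaffected by this restriction.

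The vertices $u$ and $w$ both lie in $G-v$, so there is a $u$-$w$ path $Q$ in $G-v$. Because $Q$ avoids $v$, the union $Q\cup P$ is a cycle. It has at least three vertices, namely $u,v,w$, and it contains both edges $\{u,v\}$ and $\{v,w\}$. This completes the argument.

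The only subtle point is whether "not a cut vertex" really yields connectivity of $G-v$ when $G$ might be disconnected. That is why we pass to the component containing $P$. This is also where the hypothesis that $\{u,v\}$ is not a cut edge can be invoked as a sanity check: by the fact cited in Section 2 (\cite[Theorem 2.3]{bondymurty1976}), $\{u,v\}$ lies on some cycle, so $u$ has a neighbour other than $v$. This confirms that $u$ survives in a nontrivial way in $G-v$, although connectivity of $G-v$ already suffices on its own.
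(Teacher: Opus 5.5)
Your proof is correct and follows the paper's argument: since $G-v$ is connected, there is a $u$-$w$ path avoiding $v$, and hence also avoiding the edges $\{u,v\}$ and $\{v,w\}$; its union with $P$ is the required cycle. Your passage to the component containing $P$, and your observation that the cut-edge hypotheses are not actually needed, are harmless refinements. The paper's proof likewise uses only the cut-vertex hypothesis.
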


\begin{proof}
Since $v$ is not a cut vertex of $G$, $G - v$ is connected. Therefore, in $G$, there exists a $u$-$w$ path $P'$ that does not contain $v$ nor the edges $\{u, v\}$ and $\{v, w\}$. Thus, $G$ contains the cycle formed by $P \cup P'$.
\end{proof}

\begin{proposition}\label{prop:F}
Let $i \in [1,6]$, $(H, h)$ be any $\mathcal{H}_i$ graph, and suppose that the cut-labeled graph $(N_u, \ell)$ of a directed phylogenetic network $N$ contains a minimal $\mathcal{H}_i$ structure $(F, \ell|_F) \approxeq (H, h)$. Let $\psi: V(H) \to V(F)$ be the $\mathcal{H}_i$ structure mapping as defined in Proposition~\ref{prop.map.phi}. Then the following hold:

\begin{enumerate}
  \item For any vertex $v$ of $(H, h)$, $h(v) = 1$ if and only if $\psi(v)$ is a cut vertex of $N_u$.
  \item $F$ is contained in a single block of $N_u$.
\end{enumerate}
\end{proposition}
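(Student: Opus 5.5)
Part 1 will follow almost directly from the definitions, and Part 2 will be shown by splitting $F$ into a biconnected ``core'' and the pendant paths attached to it.

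\textbf{Part 1.} By Proposition~\ref{prop.map.phi}, $(H,h)$ is label-preserving isomorphic to the graph obtained from $(F,\ell|_F)$ by smoothing every vertex outside the image of $\psi$. Label-preserving smoothing never changes the label of a surviving vertex, so $h(v)=\ell(\psi(v))$ for every $v\in V(H)$. Since $\psi(v)$ is a vertex, equation~\eqref{eq:cut} gives $\ell(\psi(v))=1$ exactly when $\psi(v)$ is a cut vertex of $N_u$. I do not expect to need more than this.

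\textbf{Label facts for Part 2.} Every edge of an $\mathcal{H}_i$ graph has label $0$. As noted in the proof of Proposition~\ref{prop.map.phi}, edge labels are preserved under label-preserving homeomorphism, so every edge of $F$ has label $0$, i.e.\ no edge of $F$ is a cut edge of $N_u$. Moreover, every vertex of $F$ outside $\mathrm{im}\,\psi$ arises from subdividing a $0$-labeled edge, so it has label $0$ and is not a cut vertex of $N_u$. The same holds for $\psi(x)$ whenever $x$ is the neighbour of a leaf in $H_i$, by condition~1 of Definition~\ref{dfn:graph.family} together with Part~1.

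\textbf{Part 2, step one: the core.} Let $H^\circ$ be $H$ with its degree-$1$ vertices (the uncontracted leaves) deleted, and let $F^\circ\subseteq F$ be the corresponding subdivision. I will check case by case that $H^\circ$ is biconnected:
\begin{itemize}
\item for $i=1,4$ it is $K_{3,3}$ or $K_5$;
\item for $i=3,6$ it is $K_{3,3}^{-}$ or $K_5^{-}$, with possibly some extra subdivision vertices coming from contracted pendant edges;
\item for $i=2,5$ it is a subdivision of $K_{2,3}$ or $K_4$.
\end{itemize}
All of these are biconnected. A subdivision of a biconnected graph is biconnected, so $F^\circ$ is a biconnected subgraph of $N_u$ and hence lies in a single block $B$.

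\textbf{Part 2, step two: the pendant paths.} Each remaining part of $F$ is a path $Q=x_0,x_1,\dots,x_m$ with $x_0=\psi(x)\in V(F^\circ)$, where $x$ is a leaf-neighbour, and $x_m$ a leaf image. Here $x_0,\dots,x_{m-1}$ all have label $0$ and all edges have label $0$. Pick an edge $e$ of $F^\circ$ incident to $x_0$. Proposition~\ref{prop:cut.vertex.cycle} applied at $x_0$ to $e$ and $\{x_0,x_1\}$ gives a cycle containing both, so $\{x_0,x_1\}$ lies in $B$. Applying the same proposition inductively at $x_1,\dots,x_{m-1}$ puts every edge of $Q$ in a cycle together with the previous edge, so every edge of $Q$ is in $B$. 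Hence $F\subseteq B$.

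\textbf{Expected obstacle.} The delicate point is bookkeeping in the contracted cases $i\in\{2,3,5,6\}$. I need to confirm that contracting pendant edges only turns leaf-neighbours into degree-$2$ (or higher) core vertices and never destroys biconnectivity of $H^\circ$. I also need that for ``gray'' vertices of label $1$ in the core, biconnectivity of $F^\circ$ suffices, so that Proposition~\ref{prop:cut.vertex.cycle} is only invoked at label-$0$ vertices. If a residual case resists, the fallback is to use minimality of $F$: any unexpected excursion of $F$ outside $B$ could be trimmed, yielding a smaller $\mathcal{H}_i$ structure.
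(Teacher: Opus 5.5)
Your architecture is the paper's: a biconnected core (the subdivision of $H$ minus its leaves), then the pendant paths attached to it by repeated use of Proposition~\ref{prop:cut.vertex.cycle}. Part~1 is fine, and you are right that biconnectivity handles the core regardless of vertex labels. However, the pendant-path step rests on a false claim and never uses the minimality hypothesis, which is genuinely needed.

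\textbf{The false label claim.} You assert that every vertex of $F$ outside $\mathrm{im}\,\psi$ ``arises from subdividing a $0$-labeled edge'' and so has label $0$. Proposition~\ref{prop.map.phi} does not give you that $F$ is a label-preserving subdivision of $H$. It only says that $(H,h)$ is obtained from $(F,\ell|_F)$ by label-preserving smoothings. Definition~\ref{dfn:label.smoothing} requires only that the two edges at the smoothed vertex have equal labels, and puts no condition on the vertex's own label. So a degree-$2$ vertex of $F$ that is a cut vertex of $N_u$ (label $1$), lying between two $0$-labeled edges, can be smoothed away.

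\textbf{Why the induction breaks.} Suppose such a vertex is an interior vertex $x_j$ ($1\le j\le m-1$) of a pendant path. Then Proposition~\ref{prop:cut.vertex.cycle} does not apply at $x_j$. Moreover, $\{x_{j-1},x_j\}$ and $\{x_j,x_{j+1}\}$ can lie in different blocks, for instance with $x_j,x_{j+1},x_m$ forming a triangle hanging off $x_j$. In that case $F$ really is not contained in a single block. This is why the statement assumes minimality.

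\textbf{The fix, as in the paper.} Suppose some interior $x_j$ with $1\le j\le m-1$ were a cut vertex of $N_u$. Delete $x_{j+1},\dots,x_m$ and their edges to get $F'\subsetneq F$. In $F'$, $x_j$ is a label-$1$ leaf, and the path $x_0,\dots,x_j$ has length $j\ge 1$ with all edges labeled $0$. So $(F',\ell|_{F'})$ is still label-preserving homeomorphic to $(H,h)$, with $x_j$ playing the role of the leaf. This contradicts minimality.

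Your ``fallback'' points in this direction, but this step must replace the incorrect label claim rather than sit behind it. Once it does, $x_0,\dots,x_{m-1}$ are all non-cut vertices, your inductive cycle argument goes through, and the proof coincides with the paper's.
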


\begin{proof}
By Proposition~\ref{prop.map.phi}, the corresponding vertex $\psi(v)$ of $(F, \ell|_F)$ for each $v$ in $(H, h)$ satisfies $\ell|_F(\psi(v)) = h(v)$. Since $\ell|_F$ is the restriction of $\ell$ to $V(F) \cup E(F)$, we have $\ell|_F(\psi(v)) = \ell(\psi(v))$. Thus, $\ell(\psi(v)) = 1$ if and only if $h(v) = 1$. Noting that $\ell(\psi(v)) = 1$ exactly when $\psi(v)$ is a cut vertex of $N_u$. 

Recall that $(H, h)$ is obtained by label-preserving contraction of at least zero pendant edges of $(H_i, h_i)$ in Definition~\ref{dfn:graph.family}, so it may not contain any vertex $v$ with $\deg_H(v) = 1$. In this case, for any $v \in V(H)$, $H - v$ is connected, i.e., $H$ is 2-connected by Definition~\ref{dfn:graph.family}. Since $F \approx H$, $F$ is also 2-connected.

Next, suppose $(H, h)$ has exactly one vertex $v$ with $\deg_H(v) = 1$. Let $u$ denote the neighbor of $v$ in $(H, h)$. Then $H$ is the union of the 2-connected graph $H - v$ and the pendant edge $\{u, v\}$. Thus, $F$ consists of a 2-connected graph $F_0$ homeomorphic to $H - v$, and a path $[\psi(u), \psi(v)]_F$ homeomorphic to $\{u, v\}$.
Since any 2-connected subgraph of an undirected graph $G$ is contained in a single block of $G$, it suffices to show that $F$ is contained in some 2-connected subgraph of $N_u$.
For the path $[\psi(u), \psi(v)]_F$ of length $k \geq 1$, set $w_1 := \psi(u)$, $w_{k+1} := \psi(v)$, so $[\psi(u), \psi(v)]_F$ can be written as $w_1 - w_2 - \dots - w_k - w_{k+1}$. Adding the edge $\{w_0, w_1\} \in E(F_0)$, the path $w_0 - w_1 - w_2 - \dots - w_k - w_{k+1}$ is denoted $P_k$.
As discussed in the proof of Proposition~\ref{prop.map.phi}, since every edge of a $\mathcal{H}_i$ graph is labeled $0$, every edge of a $\mathcal{H}_i$ structure is also labeled $0$.
Thus, no edge of $P_k$ is a cut edge of $N_u$. Similarly, no internal vertex of $P_k$ is a cut vertex of $N_u$. This follows because, by Definition~\ref{dfn:graph.family}, $h(u) = 0$, so by the claim proved above, the corresponding vertex $\psi(u) = w_1$ in $(F, \ell|_F)$ is not a cut vertex of $N_u$, and by minimality of $(F, \ell|_F)$, each $w_j$ for $j \in [2, k]$ is not a cut vertex of $N_u$.
By Proposition~\ref{prop:cut.vertex.cycle}, for each $j \in [1, k]$, there exists a cycle $C_j \subseteq N_u$ containing $w_{j-1} - w_j - w_{j+1}$. Since $F_0$ is 2-connected and $C_1$ shares the edge $\{w_0, w_1\}$ with $F_0$, it is clear that, for any two vertices $x, y$ in $F_0 \cup C_1$, there exist two $x$-$y$ paths with no common internal vertices. Hence, $F_0 \cup C_1$ is 2-connected. By induction, $F_0 \cup C_1 \cup \dots \cup C_k$ is also 2-connected. Since $P_k \subseteq C_1 \cup \dots \cup C_k$, $F \subseteq F_0 \cup C_1 \cup \dots \cup C_k \subseteq N_u$, so there exists a block of $N_u$ containing $F$. The same conclusion holds when $(H, h)$ has more than one vertex $v$ with $\deg_H(v) = 1$ by a similar argument.
\end{proof}

\begin{lemma}\label{lem:disjoint_path}
Let $N_u$ be the underlying graph of a directed phylogenetic network $N$ with source $s$ and sink set $\{t_1, \dots, t_n\}$ and assume that $N$ has at least two edges. Let $G$ be a subgraph of $N_u$ that contains a cut vertex of $N_u$ and is contained within a block $B$ of $N_u$. For any subset $V_1 = \{v_1, \dots, v_k\} \ (k \geq 1)$ of the set of cut vertices of $N_u$ contained in $V(G)$, let $N^x$ and its sink $t$ be defined as in Definition~\ref{dfn:st-completions}. Then, the underlying graph $N^x_u$ of $N^x$ contains a collection $\{P_1, \dots, P_k\}$ of undirected paths satisfying the following conditions:
\begin{enumerate}
   \item For every $i \in [1, k]$, $P_i$ is a path in $N^x_u$ connecting $v_i \in V_1$ to the sink $t$.
   \item For any distinct $i, j \in [1, k]$, $V(P_i) \cap V(P_j) = \{t\}$.
   \item For every $i \in [1, k]$, $V(P_i) \cap V(B) = \{v_i\}$.
\end{enumerate}
\end{lemma}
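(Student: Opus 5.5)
For each cut vertex $v_i\in V_1$, I will take a component of $N_u - v_i$ that avoids $B$, find a terminal inside it, and reach $t$ through that terminal's attachment vertex. The three conditions will then come from the block–cut tree structure: distinct cut vertices of $B$ hang off disjoint "branches" outside $B$.

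\textbf{Step 1: choose a branch for each $v_i$.} Since $v_i$ is a cut vertex of $N_u$, $N_u-v_i$ has at least two components. The graph $B-v_i$ is connected (or a single vertex if $B$ is a bridge), so it lies in one component. Hence some component $C_i$ of $N_u-v_i$ is disjoint from $B$. For distinct $i,j$, the sets $C_i$ and $C_j$ are disjoint and neither contains $v_i$ or $v_j$. Indeed, $C_i\cup\{v_i\}$ together with $B$ and the block–cut tree shows that $C_i$ consists of everything in the subtree hanging off $B$ at $v_i$, and distinct attachment vertices give disjoint subtrees. I would argue this directly: a path from $C_i$ to $C_j$ avoiding $v_i$ would have to reach $v_j\in B$, contradicting that $C_i$ is separated from $B\setminus\{v_i\}$ in $N_u-v_i$.

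\textbf{Step 2: find a terminal in $C_i$.} $C_i\cup\{v_i\}$ induces a finite connected subgraph, and $v_i$ has at least one neighbour in $C_i$. Take a longest path in $N_u[C_i\cup\{v_i\}]$ starting at $v_i$; its other end $x$ lies in $C_i$. If $\deg(x)\ge 2$, acyclicity of $N$ does not immediately give a leaf, so instead I use the directed structure. Starting at a neighbour of $v_i$ in $C_i$, repeatedly follow out-arcs inside $C_i$. If at some vertex all out-arcs leave $C_i$, they go to $v_i$; then follow in-arcs backward instead. More cleanly, every vertex of $C_i$ has all its $N$-neighbours in $C_i\cup\{v_i\}$. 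The sub-digraph on $C_i$ is acyclic and nonempty, so it has a vertex with no out-neighbour in $C_i$ and a vertex with no in-neighbour in $C_i$.

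I claim some terminal $\tau_i\in\{s,t_1,\dots,t_n\}$ lies in $C_i$. Suppose not. Every vertex of $C_i$ then has in- and out-degree $\ge1$ in $N$. Since all edges between $C_i$ and the rest go to $v_i$, a vertex with no out-neighbour in $C_i$ has an arc to $v_i$, and a vertex with no in-neighbour in $C_i$ has an arc from $v_i$. This yields a directed path $v_i\to\cdots\to v_i$ through $C_i$, i.e.\ a directed cycle, contradicting acyclicity. The path is obtained by starting from a source of $N[C_i]$, which has an arc from $v_i$, and walking forward to a sink of $N[C_i]$, which has an arc to $v_i$. This is the key step. The hypothesis that $N$ has at least two edges ensures $\tilde s$, $\tilde t_j$ are well defined and that $\tau_i\ne v_i$ issues do not arise.

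\textbf{Step 3: build $P_i$.} Let $\tilde\tau_i$ be the unique neighbour of $\tau_i$ ($\tilde s$ or $\tilde t_j$). The degree-one vertex $\tau_i$ lies in $C_i$ and $C_i\cup\{v_i\}$ is connected with at least two vertices, so $\tilde\tau_i\in C_i\cup\{v_i\}$. If $\tilde\tau_i=v_i$, let $P_i=v_i-t$, using the edge $\{\tilde\tau_i,t\}\in N^x_u$. Otherwise take a $v_i$–$\tilde\tau_i$ path inside $N_u[C_i\cup\{v_i\}]$ and append the edge $\{\tilde\tau_i,t\}$. Then $V(P_i)\subseteq C_i\cup\{v_i,t\}$. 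Condition 3 follows from $C_i\cap V(B)=\emptyset$. Condition 2 follows from the disjointness of the $C_i$ in Step 1 together with the distinctness of the $v_i$, and from $v_j\notin C_i$. The vertex $v_j\in B$ is excluded from $C_i$ because $C_i\cap B=\emptyset$. The main obstacle is Step 2, guaranteeing a terminal in each branch, which the acyclicity argument handles.
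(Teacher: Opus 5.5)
Your proof is correct and follows the paper's overall plan: for each $v_i$, take the part of $N_u$ hanging off $B$ at $v_i$, find a terminal in it, and go from $v_i$ to that terminal's attachment vertex ($\tilde s$ or some $\tilde t_j$) and then to $t$. Conditions 2 and 3 then follow because distinct branches are disjoint and avoid $B$.

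The difference is in how the key fact is established. The paper works in the block--cut tree $T_{N_u}$ and uses the subtree at $a_i$ obtained by deleting the tree edge $\{a_i,b\}$. It asserts that the leaves of $T_{N_u}$ are exactly the pendant edges at terminals, justifying this only by pointing to the informal flow discussion of Remark~\ref{rem:no.null}. Disjointness of the branches is then automatic from the tree structure.

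You instead use a component $C_i$ of $N_u-v_i$ that avoids $B$, and you \emph{prove} that it contains a terminal. If it did not, a source $x$ of $N[C_i]$ and a sink $y$ reached from $x$ inside $C_i$ would give a directed cycle $v_i\to x\to\cdots\to y\to v_i$. This makes your argument self-contained and rigorous exactly where the paper is sketchiest. The cost is that you must check disjointness by hand.

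Your wording for disjointness is loose; the clean version is this. $C_i$ is connected and avoids $v_j\in V(B)$, so it lies in a single component of $N_u-v_j$. If that component were $C_j$, then $v_i$, which is adjacent to $C_i$ and distinct from $v_j$, would also lie in $C_j$, contradicting $C_j\cap V(B)=\emptyset$.

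Two clean-up points for a write-up:
\begin{itemize}
\item Drop the abandoned attempts at the start of Step 2 (the longest-path idea and the forward/backward walking); the source/sink argument alone suffices.
\item The two-edge hypothesis plays no real role in your argument. $\tilde s$ and $\tilde t_j$ are always defined, $\tau_i\neq v_i$ holds automatically, and if $N$ is a single arc then $N_u$ has no cut vertex, so the lemma is vacuous.
\end{itemize}
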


\begin{proof}
For any connected graph $H$, the graph $T_H$ defined as follows is a tree~\cite[Theorem 1]{harary-prins1966}: $V(T_H)$ is the union of the set $\mathcal{A}$ of cut vertices of $H$ and the set $\mathcal{B}$ of blocks of $H$, and $\{a, b\} \in E(T_H)$ if and only if $a \in V(b)$ for $a \in \mathcal{A}$ and $b \in \mathcal{B}$. Then, by \upd{the assumption that $N_u$ can be oriented into $N$}, there is a one-to-one correspondence between the set of leaves of the tree $T_{N_u}$ and the set of pendant edges $\{\{t_1, \tilde{t_1}\}, \dots, \{t_n, \tilde{t_n}\}, \{s, \tilde{s}\}\}$ of $N_u$, where we used the argument in Remark \ref{rem:no.null}. For each cut vertex $v_i$ of $N_u$ in $V(G)$, let $a_i$ denote the corresponding vertex of $T_{N_u}$, let $b$ denote the vertex of $T_{N_u}$ corresponding to the block of $N_u$ containing $G$, and let $Z$ be the set of vertices of $T_{N_u}$ corresponding to the set $\{\tilde{t_1}, \dots, \tilde{t_n}, \tilde{s}\}$, that is cut vertices of $N_u$ incident to pendant edges. Then, for any $i \in [1, k]$, in the component $C(a_i)$ of $T_{N_u} - \{a_i, b\}$ containing $a_i$, there exists a path $\pi_i$ from $a_i$ to some $z_i \in Z$.
Let $P_i$ be the path in $N^x_u$ from $v_i$ to $z_i$ corresponding to each $\pi_i$, with the edge $\{z_i, t\}$ appended. 
It follows that each $P_i$ is a path in $N^x_u$ from $v_i$ to $t$, and the fact that each $\pi_i$ does not include $b$ ensures that each $P_i$ does not include any edge of $B$, so $V(P_i) \cap V(B) = \{v_i\}$. Moreover, since $C(a_i)$ and $C(a_j)$ are disjoint for $i \neq j$, $V(P_i) \cap V(P_j) = \{t\}$.
Hence, $\{P_1, \dots, P_k\}$ satisfies conditions 1--3.
\end{proof}

\begin{figure}[htbp]
\centering
\includegraphics[width=.7\textwidth]{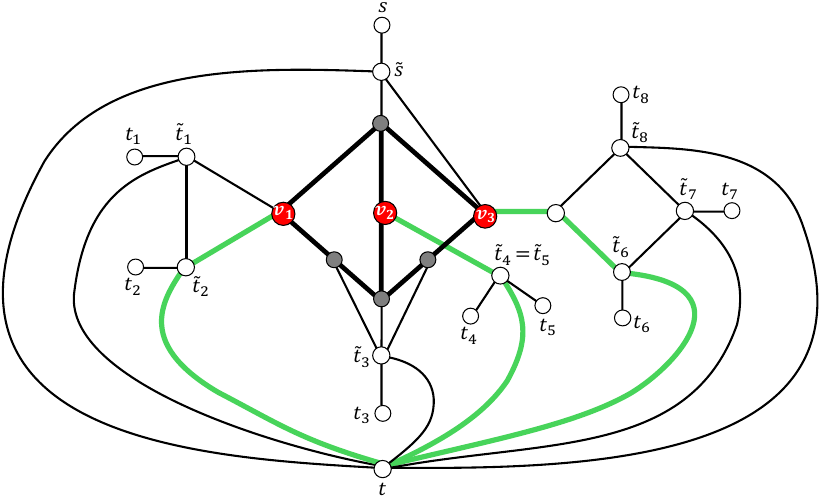}
\caption{The line segments represent $E(N_u)$, the curves represent $E(N^x_u) \setminus E(N_u)$. The filled vertices denote $V(G)$, in particular, $V_1 = \{v_1, v_2, v_3\}$ is shown in red. The thick black segments represent $E(G)$. In this situation, the collection of three green paths $[v_1, t], [v_2, t], [v_3, t]$ is one example that satisfies the conditions of Lemma~\ref{lem:disjoint_path}.
\label{fig:disjoint_path_N_ast}
}
\end{figure}

\begin{proposition}\label{prop:part1}
Let $N$ be a directed phylogenetic network. If $N$ is terminal planar, then for any $i \in [1,6]$, the cut-labeled graph $L(N_u)$ of $N$ does not contain an $\mathcal{H}_i$ structure.
\end{proposition}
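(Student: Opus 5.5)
We argue by contraposition. Suppose $L(N_u)$ contains an $\mathcal{H}_i$ structure for some $i\in[1,6]$. We will show that the underlying graph $N^x_u$ of the terminal cut-completion contains a subgraph homeomorphic to $K_{3,3}$ (when $i\in\{1,2,3\}$) or $K_5$ (when $i\in\{4,5,6\}$). By Theorem~\ref{thm:kuratowski}, $N^x$ is then non-planar, and by Theorem~\ref{lem:N_completion}, $N$ is not terminal planar.

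\textbf{Setup.} Take a minimal $\mathcal{H}_i$ structure $(F,\ell|_F)\approxeq(H,h)$, where $(H,h)$ is an $\mathcal{H}_i$ graph, and let $\psi$ be an $\mathcal{H}_i$ structure mapping (Proposition~\ref{prop.map.phi}). By Proposition~\ref{prop:F}:
\begin{itemize}
\item $F$ lies in a single block $B$ of $N_u$;
\item the vertices $\psi(v)$ with $h(v)=1$ are exactly the vertices of the image of $\psi$ that are cut vertices of $N_u$.
\end{itemize}
Let $V_1$ be this set of labeled vertices. If $V_1=\emptyset$, then $H$ has no vertex of label $1$. Since the label-$1$ vertices of an $\mathcal{H}_i$ graph are precisely the degree-one vertices of $(H_i,h_i)$ and the vertices created by contracting pendant edges, this forces $i\in\{1,4\}$. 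In that case $F$ is already a subdivision of $K_{3,3}$ or $K_5$ inside $N_u\subseteq N^x_u$, and we are done.

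\textbf{Attaching the paths to $t$.} Otherwise, apply Lemma~\ref{lem:disjoint_path} to $G=F$, the block $B$, and $V_1$. This gives paths $P_1,\dots,P_k$ in $N^x_u$ from the vertices of $V_1$ to $t$, pairwise meeting only at $t$ and meeting $B$ only at their starting vertices. Note that $t\notin V(N_u)$, so $t\notin V(F)$. It follows that $F':=F\cup P_1\cup\dots\cup P_k$ is $F$ with $k$ internally disjoint paths joining the labeled vertices to the new common vertex $t$.

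\textbf{Reconstructing $K_{3,3}$ or $K_5$.} We check case by case that $F'$ is homeomorphic to a graph containing $K_{3,3}$ or $K_5$ (up to subdivision).
\begin{itemize}
\item For $H_2\simeq K_{3,3}^{(-v)}$ and $H_5\simeq K_5^{(-v)}$, the label-$1$ vertices are the former neighbours of the deleted vertex, possibly after contraction of pendant edges. Routing all of them to $t$ restores the deleted vertex as $t$, yielding a subdivision of $K_{3,3}$ or $K_5$.
\item For $H_3\simeq K_{3,3}^{(-e)}$ and $H_6\simeq K_5^{(-e)}$, there are two label-$1$ endpoints (or contracted versions of them) of the removed middle edge. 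Joining both to $t$ restores that edge as the subdivided path through $t$.
\item Pendant-edge contractions only replace a pendant path by its attachment vertex being labeled. The path $P_j$ then starts at that attachment vertex, which gives the same subdivision.
\item Label-$1$ vertices in $H_1$ or $H_4$, if any, are simply left unused.
\end{itemize}

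The main obstacle is making this case check rigorous and uniform. In particular, we must verify that the contracted variants still yield exactly the required branch vertices, for example that a contracted pendant vertex of degree $2$ in $H_2$ correctly plays the role of a subdivision vertex on an edge to the deleted vertex. We must also confirm that the hypothesis of Lemma~\ref{lem:disjoint_path} (at least two edges, and $G$ containing a cut vertex) holds, which it does since $V_1\neq\emptyset$.
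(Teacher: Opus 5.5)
Your proposal is correct and follows essentially the same route as the paper: take a minimal $\mathcal{H}_i$ structure, use Propositions~\ref{prop.map.phi} and~\ref{prop:F} to place it in one block with its label-$1$ branch vertices being cut vertices of $N_u$, attach those vertices to $t$ via Lemma~\ref{lem:disjoint_path}, and recover a subdivision of $K_{3,3}$ or $K_5$ in $N^x_u$, contradicting Theorem~\ref{lem:N_completion}. One small inaccuracy: the label-$1$ vertices of an $\mathcal{H}_i$ graph are not \emph{precisely} the leaves and contracted vertices, since vertices of $(H_i,h_i)$ that are neither leaves nor neighbours of leaves may also carry label $1$. This does not break anything: your conclusion $V_1=\emptyset\Rightarrow i\in\{1,4\}$ still holds, and, as the paper does, you can apply the lemma only to the leaf/contracted vertices (or discard the extra paths, as you already do for $\mathcal{H}_1,\mathcal{H}_4$).
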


\begin{proof}
Let $N$ be a terminal planar directed phylogenetic network with source $s$ and sink set $\{t_1, \dots, t_n\}$.
Suppose for contradiction, that $L(N_u) = (N_u, \ell)$ contains some minimal $\mathcal{H}_i$ structure $(F, \ell|_F)$ for some $i \in [1,6]$. It suffices to only consider $i \in \{2, 3, 5, 6\}$.
This is because $N_u$ is planar, by Theorem~\ref{thm:kuratowski}, $N_u$ does not contain any subgraph that is a subdivision of $K_5$ or $K_{3,3}$. However, if $i = 1$, then $F \subseteq N_u$ is a subdivision of $K_{3,3}$, and if $i = 4$, then $F \subseteq N_u$ is a subdivision of $K_5$.
Therefore, it is sufficient to derive a contradiction for each of the following four cases:
i) $i = 2$, ii) $i = 3$, iii) $i = 5$, and iv) $i = 6$.
Recall from \upd{Theorem~\ref{lem:N_completion}} that $N$ is terminal planar if and only if the underlying graph $N^x_u$ of the terminal cut-completion $N^x$ of $N$ is planar.
We shall show that, in each case i)--iv), $N^x_u$ contains a subdivision of $K_5$ or $K_{3,3}$.

\begin{enumerate}[i)]
\item In the case $i=2$,
$(F, \ell|_F)$ is label-preserving homeomorphic to some $\mathcal{H}_2$ graph $(H, h)$, and by Proposition~\ref{prop.map.phi}, $F$ is isomorphic to a subdivision of $H$.
Since $F \subseteq N_u \subseteq N^x_u$, $N^x_u$ also contains $F$.
$H$ has three vertices $v_1, v_2, v_3$ with $\deg_H(v_j) \in \{1, 2\}$.
Recall that $H$ is obtained by label-preserving contraction of zero or more pendant edges of $(H_2, h_2)$ in Figure~\ref{fig:forbidden},
and thus $h(v_j) = 1$ for $j \in [1,3]$.
By Proposition~\ref{prop:F}, each $\psi(v_j) \in V(F)$ is a cut vertex of $N_u$. Also by Proposition~\ref{prop:F}, $F$ is contained in a block of $N_u$. Therefore, regarding $\{\psi(v_1), \psi(v_2), \psi(v_3)\}$ as $V_1$ in Lemma~\ref{lem:disjoint_path},
$N^x_u$ contains a set $\{P_1, P_2, P_3\}$ of undirected paths satisfying the conditions 1--3 of Lemma~\ref{lem:disjoint_path}.
Since $F, P_1, P_2, P_3 \subseteq N^x_u$, $F \cup P_1 \cup P_2 \cup P_3$ is a subgraph of $N^x_u$. By conditions 1 and 3 of Lemma~\ref{lem:disjoint_path}, each $P_j$ is a path connecting $\psi(v_j) \in V_1$ to the sink $t \in V(N^x_u)$ of $N^x$, with $V(P_j) \cap V(F) = \{\psi(v_j)\}$.
By condition 2, $V(P_j) \cap V(P_k) = \{t\}$ for $j \neq k$.
Therefore, as shown in Figure~\ref{fig:F_K33_K5}(i), $F \cup P_1 \cup P_2 \cup P_3$ is a subdivision of $K_{3,3}$.

\item In the case $i = 3$,
$(F, \ell|_F)$ is label-preserving homeomorphic to some $\mathcal{H}_3$ graph $(H, h)$, and by Proposition~\ref{prop.map.phi}, $F$ is isomorphic to a subdivision of $H$.
By the same reasoning as in case i), $N^x_u$ also contains $F$.
$H$ has two vertices $v_1, v_2$ with $\deg_H(v_j) \in \{1, 2\}$.
Recall that $H$ is obtained by label-preserving contraction of zero or more pendant edges of $(H_3, h_3)$ in Figure~\ref{fig:forbidden},
and thus $h(v_j) = 1$ for $j \in [1,2]$.
By Proposition~\ref{prop:F}, each $\psi(v_j) \in V(F)$ is a cut vertex of $N_u$. Also by Proposition~\ref{prop:F}, $F$ is contained in a block of $N_u$. Therefore, regarding $\{\psi(v_1), \psi(v_2)\}$ as $V_1$ in Lemma~\ref{lem:disjoint_path},
$N^x_u$ contains a set $\{P_1, P_2\}$ of undirected paths satisfying the conditions 1--3 of Lemma~\ref{lem:disjoint_path}.
Since $F, P_1, P_2 \subseteq N^x_u$, $F \cup P_1 \cup P_2$ is a subgraph of $N^x_u$.
By the same reasoning as in case i), as shown in Figure~\ref{fig:F_K33_K5}(ii), $F \cup P_1 \cup P_2$ is a subdivision of $K_{3,3}$.

\item In the case $i = 5$,
$(F, \ell|_F)$ is label-preserving homeomorphic to some $\mathcal{H}_5$ graph $(H, h)$, and by Proposition~\ref{prop.map.phi}, $F$ is isomorphic to a subdivision of $H$.
By the same reasoning as in case i), $N^x_u$ also contains $F$.
$H$ has four vertices $v_1, v_2, v_3, v_4$ with $\deg_H(v_j) \in \{1, 3\}$.
Recall that $H$ is obtained by label-preserving contraction of zero or more pendant edges of $(H_5, h_5)$ in Figure~\ref{fig:forbidden},
and thus $h(v_j) = 1$ for $j \in [1,4]$.
By Proposition~\ref{prop:F}, each $\psi(v_j) \in V(F)$ is a cut vertex of $N_u$. Also by Proposition~\ref{prop:F}, $F$ is contained in a block of $N_u$. Therefore, regarding $\{\psi(v_1), \psi(v_2), \psi(v_3), \psi(v_4)\}$ as $V_1$ in Lemma~\ref{lem:disjoint_path},
$N^x_u$ contains a set $\{P_1, P_2, P_3, P_4\}$ of undirected paths satisfying the conditions 1--3 of Lemma~\ref{lem:disjoint_path}.
Since $F, P_1, P_2, P_3, P_4 \subseteq N^x_u$, $F \cup P_1 \cup P_2 \cup P_3 \cup P_4$ is a subgraph of $N^x_u$.
By the same reasoning as in case i), as shown in Figure~\ref{fig:F_K33_K5}(iii), $F \cup P_1 \cup P_2 \cup P_3 \cup P_4$ is a subdivision of $K_5$.

\item In the case $i = 6$,
$(F, \ell|_F)$ is label-preserving homeomorphic to some $\mathcal{H}_6$ graph $(H, h)$, and by Proposition~\ref{prop.map.phi}, $F$ is isomorrphic to a subdivision of $H$.
By the same reasoning as in case i), $N^x_u$ also contains $F$.
$H$ has two vertices $v_1, v_2$ with $\deg_H(v_j) \in \{1, 3\}$.
Recall that $H$ is obtained by label-preserving contraction of zero or more pendant edges of $(H_6, h_6)$ in Figure~\ref{fig:forbidden},
and thus $h(v_j) = 1$ for $j \in [1,2]$.
By Proposition~\ref{prop:F}, each $\psi(v_j) \in V(F)$ is a cut vertex of $N_u$. Also by Proposition~\ref{prop:F}, $F$ is contained in a block of $N_u$. Therefore, regarding $\{\psi(v_1), \psi(v_2)\}$ as $V_1$ in Lemma~\ref{lem:disjoint_path},
$N^x_u$ contains a set $\{P_1, P_2\}$ of undirected paths satisfying the conditions 1--3 of Lemma~\ref{lem:disjoint_path}.
Since $F, P_1, P_2 \subseteq N^x_u$, $F \cup P_1 \cup P_2$ is a subgraph of $N^x_u$.
By the same reasoning as in case i), as shown in Figure~\ref{fig:F_K33_K5}(iv), $F \cup P_1 \cup P_2$ is a subdivision of $K_5$.
\end{enumerate}

Therefore, in each of the cases i)--iv), by Theorem~\ref{thm:kuratowski}, $N^x_u$ is not planar. However, by \upd{Theorem~\ref{lem:N_completion}}, if $N$ is terminal planar, then $N^x$ is planar,  and hence $N^x_u$ is also planar. This is a contradiction. Thus, if $N$ is terminal planar, then $L(N_u)$ does not contain any of the structures $\mathcal{H}_1, \dots, \mathcal{H}_6$.
\end{proof}

\begin{figure}[htbp]
\centering
\includegraphics[width=.9\textwidth]{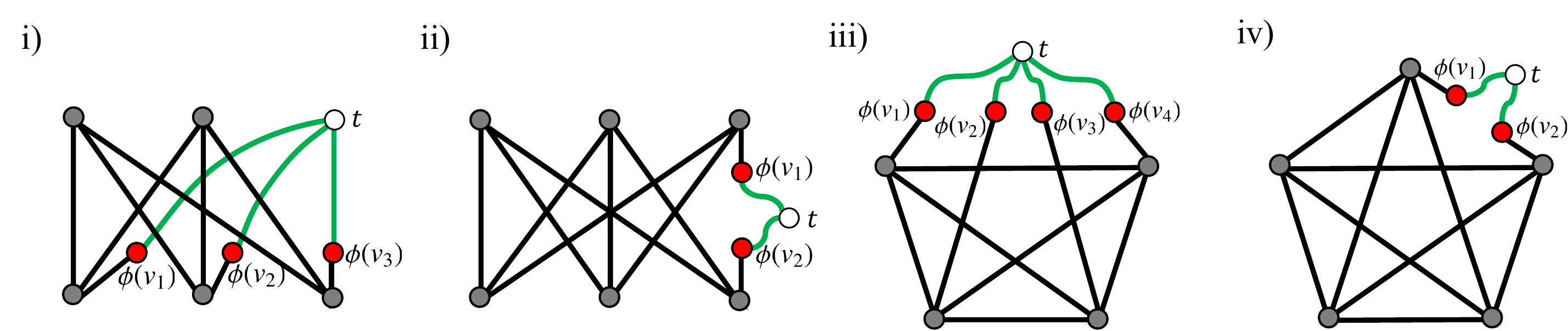}
\caption{An example of a subgraph $(F, \ell|_F)$ of $N^x_u$ discussed in the proof of Propositionon~\ref{prop:part1} in Cases i)--iv) (note that only the cases where each $v_j$ satisfies $\deg_H(v_j)=1$ are illustrated here). In this figure, every line connecting two vertices represents a path in $N^x_u$, and $E(F)$ denotes the union of all edges contained in the black paths. The cut vertices $\psi(v_j)$ of $N_u$ contained in $V(F)$ are shown in red, and for each $\psi(v_j)$, a path $P_j$ satisfying Conditions 1--3 of Lemma~\ref{lem:disjoint_path} is depicted as a green line.
\label{fig:F_K33_K5}}
\end{figure}

\subsection{Forbidden Structures for Terminal Planar Networks: Proof of Sufficiency}

\begin{proposition}\label{prop:part2}
Let $N$ be a directed phylogenetic network. If $N$ is not terminal planar, then there exists $i \in [1,6]$ such that the cut-labeled graph $L(N_u)$ of $N$ contains an $\mathcal{H}_i$ structure.
\end{proposition}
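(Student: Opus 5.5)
We argue by contraposition through Theorem~\ref{lem:N_completion}. If $N$ is not terminal planar, then $N^x_u$ is non-planar, so by Theorem~\ref{thm:kuratowski} it contains a subgraph $K$ that is a subdivision of $K_5$ or $K_{3,3}$. We choose $K$ to be minimal in a suitable sense: it uses as few edges of $E(N^x_u)\setminus E(N_u)$ as possible, and subject to that it is edge-minimal. The new vertex $t$ is adjacent only to the cut vertices $\tilde s,\tilde t_1,\dots,\tilde t_n$. So $K$ either avoids $t$, or passes through $t$ with degree $2$, $3$, or $4$. A degree-$2$ passage is impossible only if it is a branch vertex; otherwise $t$ lies on a subdivided edge of $K$.

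The plan is to locate $K\cap N_u$ inside a single block $B$ of $N_u$ and then read off an $\mathcal{H}_i$ structure. We split into cases according to the role of $t$ in $K$.

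\textbf{Case (a): $t\notin V(K)$.} Then $K\subseteq N_u$. Since $K$ is $2$-connected, it lies in one block, so it has no cut edges, and its branch vertices and edge labels are all $0$ along $K$. Label-preserving, it is then an $\mathcal{H}_1$ or $\mathcal{H}_4$ structure. Cut-vertex labels on degree-$2$ vertices are irrelevant here because $\mathcal{H}_1,\mathcal{H}_4$ graphs impose no vertex labels. One subtlety: a degree-$2$ vertex with label $1$ on a branch path blocks label-preserving smoothing only if the two incident edge labels differ, and they do not.

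\textbf{Case (b): $t$ is an internal vertex of a branch path $x\!-\!\cdots\!-\!a\!-\!t\!-\!b\!-\!\cdots\!-\!y$.} Here $a,b$ are cut vertices of $N_u$. We remove $t$ and consider the two pieces of the path outside $B$. Let $B$ be the block containing $K-t$ minus its pendant detours. More precisely, we prune: every maximal segment of $K$ that leaves $B$ does so through a cut vertex. By the block–cut tree argument used in Lemma~\ref{lem:disjoint_path}, a segment leaving $B$ at a cut vertex $c$ can never return to $B$ except through $c$. So the only way $K$ leaves $B$ is via $t$. Hence $K\cap B$ is $K$ with the segment through $t$ replaced by two segments ending at cut vertices $c_1,c_2\in V(B)$, which may be branch vertices if a branch vertex itself is a cut vertex. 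Minimality ensures $c_1\neq c_2$: if they were equal, replacing the detour by nothing, or by a path in $B$, would contradict minimality or produce a case-(a) subdivision.

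The resulting graph is exactly $K_{3,3}^{(-e)}$ or $K_5^{(-e)}$ with two label-$1$ ends. These ends are either pendant (when $c_j$ is interior to the branch path) or contracted onto a branch vertex. This yields an $\mathcal{H}_3$ or $\mathcal{H}_6$ structure, with all edges in $B$ labeled $0$.

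\textbf{Case (c): $t$ is a branch vertex.} The same pruning gives three (for $K_{3,3}$) or four (for $K_5$) branch paths from $t$. These enter $B$ at distinct cut vertices $c_j$, and the remainder gives $K_{3,3}^{(-v)}$ or $K_5^{(-v)}$ with label-$1$ ends, possibly contracted. This yields an $\mathcal{H}_2$ or $\mathcal{H}_5$ structure.

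The labeling requirement in Definition~\ref{dfn:graph.family} also needs checking. Neighbours of pendant ends must have label $0$: if the attachment vertex on $K$ of the pendant path were itself a cut vertex, we would instead contract, i.e.\ shorten the pendant segment. Interior vertices of pendant paths that are cut vertices are handled by truncating at the first cut vertex met from the branch side.

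The main obstacle is the pruning step: showing that all of $K$ except the portions through $t$ lies in a single block, and that the exits are at distinct cut vertices carrying the right labels. I would attack it with the block–cut tree $T_{N_u}$. For each block $B'$, consider the subgraph $K\cap B'$. Since $K-t$ is connected (or becomes so after the branch vertex $t$ is removed) and $K$ is $2$-connected, the blocks met by $K$ form a subtree. Any leaf block of this subtree other than the one containing a branch vertex contributes only a path whose two exits coincide, which contradicts $2$-connectivity of $K$. The exception is when the path is routed through $t$, and that case is impossible in a block–cut subtree unless it leads to a pendant terminal. This shows that at most one block contains branch vertices, namely $B$. Care is then needed for branch vertices that are themselves cut vertices, which is where the contracted variants of the $\mathcal{H}_i$ graphs arise.
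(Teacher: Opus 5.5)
Your proposal is correct and follows essentially the paper's argument: take a Kuratowski subgraph of the completion, split on whether $t$ is absent, a subdivision vertex, or a branch vertex, delete $t$, and truncate each pendant path at the first cut vertex met from the branch side to obtain an $\mathcal{H}_1$/$\mathcal{H}_4$, $\mathcal{H}_3$/$\mathcal{H}_6$, or $\mathcal{H}_2$/$\mathcal{H}_5$ structure. The paper passes to $N^c_u$ rather than working in $N^x_u$, but nothing changes, since the neighbours of $t$ in $N^x_u$ are already cut vertices. Your ``main obstacle'' is easier than your block--cut-tree sketch suggests, and that sketch's claim that only one block contains branch vertices is not right, because branch vertices can themselves be cut vertices. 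The right observation is that the core, obtained from $K$ by deleting $t$ together with the interiors of the branch paths through $t$, is a subdivision of $K_{2,3}$, $K_4$, $K_{3,3}^-$ or $K_5^-$. It is therefore $2$-connected, so it lies in one block and has only label-$0$ edges. Distinctness of the exits is automatic, because the pendant paths to $t$ are internally disjoint, so no minimality assumption is needed.
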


\begin{proof}
If $N$ is not planar, then by Theorem~\ref{thm:kuratowski}, $N_u \approx K_{3,3}$ or $N_u \approx K_{5}$, so $L(N_u)$ contains either an $\mathcal{H}_1$ or $\mathcal{H}_4$ structure. 
Assume $N$ is planar but not terminal planar. We show that $L(N_u)$ contains an $\mathcal{H}_i$ structure for some $i \in \{2,3,5,6\}$.
By \upd{Theorem~\ref{lem:N_completion}}, the above assumption is equivalent to $N$ being planar while $N^x$ is not planar. By Proposition~\ref{prop:relation_completion}, if $N^x$ is not planar, then $N^c$ is also not planar. Thus, $N_u$ is planar, but the supergraph $N^c_u$ is not. Therefore, by Theorem~\ref{thm:kuratowski}, there exists a subgraph $G$ of $N^c_u$ that is homeomorphic to $K_5$ or $K_{3,3}$, with $G \not\subseteq N_u$.
Let $t$ denote the sink of $N^c$. By Definition~\ref{dfn:cut_completion}, $V(N^c_u) = V(N_u) \cup \{t\}$ and $E(N^c_u) = E(N_u) \cup \{ \{v_1, t\}, \dots, \{v_k, t\} \}$, so $G \not\subseteq N_u$ and $G \subseteq N^c_u$ implies $t \in V(G)$.
Since $G \approx K_{3,3}$ or $G \approx K_5$, and $\deg_G(t) = 2$ or $\deg_G(t) \neq 2$, we consider the following four cases.
Let $L(N_u) = (N_u, \ell)$.

\begin{itemize}
  \item \textbf{Case 1:} $G \approx K_{3,3}$ and $\deg_G(t) \neq 2$.
  Since $G \approx K_{3,3}$ and $\deg_G(t) = 3$, $G-t$ is homeomorphic to a graph obtained from $K_{3,3}^{(-v)}(\simeq H_2)$ by contracting zero or more pendant edges.
  By Definition~\ref{dfn:cut_completion}, any subgraph of $N^c_u$ that excludes $t$ and its incident edges is a subgraph of $N_u$, so $G-t \subseteq N_u$.
  Furthermore, $G-t$ contains three distinct cut vertices $v_1, v_2, v_3$ of $N_u$, and by the definition of $L(N_u)$, we have $\ell(v_i) = 1$ for all $i \in [1,3]$.
  Let $\{t, w_1, w_2\} \sqcup \{u_1, u_2, u_3\} = \{ v \in V(G) \mid \deg_G(v) = 3 \}$. Then $G$ is the union of a graph $G'$ homeomorphic to $K_{2,3}$ and three internally vertex-disjoint $u_i$-$t$ paths $[u_1, t]_G, [u_2, t]_G, [u_3, t]_G$, where each $v_i$ satisfies $[v_i, t]_G \subseteq [u_i, t]_G$.
   Since $K_{2,3}$ is 2-connected, $G'$ contains no cut edge of $N_u$, thus $\ell(e) = 0$ for every edge $e$ of $G'$. If $u_i = v_i$ for all $i$, then $(G', \ell|_{G'})$ is an $\mathcal{H}_2$ structure in $(N_u, \ell)$.
  Next, if $u_1 \neq v_1$, since $G$ contains no degree-1 vertex, $[u_1, v_1]_G$ contains no pendant edge of $N_u$. 
  Without loss of generality, we may assume that $[u_1, v_1]_G$ contains no cut edge of $N_u$, by choosing $v_1$ to be the cut vertex of $N_u$ closest to $u_1$ along the path $[u_1, t]_G$.
  By redefining $G' \cup [u_1, v_1]_G$ as $G'$, $(G', \ell|_{G'})$ is an $\mathcal{H}_2$ structure.This reasoning applies for any number of indices $i$ such that $u_i \neq v_i$.
  Therefore, $(N_u, \ell)$ contains an $\mathcal{H}_2$ structure.

  \item \textbf{Case 2:} $G \approx K_{3,3}$ and $\deg_G(t) = 2$.
  Since $G \approx K_{3,3}$ and $\deg_G(t) = 2$, $G-t$ is homeomorphic to a graph obtained from $K_{3,3}^{(-e)} (\simeq H_3)$ by contracting zero or more pendant edges.
  As in Case 1, $G-t \subseteq N_u$, and $G-t$ contains two distinct cut vertices $v_1, v_2$ of $N_u$, with $\ell(v_i) = 1$ for $i \in [1,2]$.
  Let $\{u_1, w_1, w_2\} \sqcup \{u_2, w_3, w_4\} = \{ v \in V(G) \mid \deg_G(v) = 3 \}$. Then $G$ is the union of a graph $G'$ homeomorphic to $K_{3,3}^{-}$ and two internally vertex-disjoint $u_i$-$t$ paths $[u_1, t]_G, [u_2, t]_G$, with each $v_i$ satisfying $[v_i, t]_G \subseteq [u_i, t]_G$.
  Since $K_{3,3}^{-}$ is 2-connected, $G'$ contains no cut edge of $N_u$, thus $\ell(e) = 0$ for any edge $e$ of $G'$. If $u_i = v_i$ for both $i$, then $(G', \ell|_{G'})$ is an $\mathcal{H}_3$ structure. If $u_1 \neq v_1$, using the same reasoning as in Case 1, $[u_1, v_1]_G$ can be assumed to contain no cut edge. 
  By redefining $G' \cup [u_1, v_1]_G$ as $G'$, $(G', \ell|_{G'})$ is an $\mathcal{H}_3$ structure. This applies for any number of $u_i \neq v_i$. 
  Therefore, $(N_u, \ell)$ contains an $\mathcal{H}_3$ structure.

  \item \textbf{Case 3:} $G \approx K_5$ and $\deg_G(t) \neq 2$.
  For $G \approx K_5$ and $\deg_G(t) = 4$, $G-t$ is homeomorphic to a graph obtained from $K_5^{(-v)} (\simeq H_5)$ by contracting zero or more pendant edges.
  By similar logic, $G-t \subseteq N_u$ and $G-t$ contains four distinct cut vertices $v_1, v_2, v_3, v_4$ of $N_u$ with $\ell(v_i) = 1$ for all $i \in [1,4]$.
  Let $\{t, u_1, u_2, u_3, u_4\} = \{ v \in V(G) \mid \deg_G(v) = 4 \}$. Then $G$ is the union of a graph $G'$ homeomorphic to $K_4$ and four internally vertex-disjoint $u_i$-$t$ paths $[u_1, t]_G, [u_2, t]_G, [u_3, t]_G, [u_4, t]_G$ with each $v_i$ satisfying $[v_i, t]_G \subseteq [u_i, t]_G$.
  Since $K_4$ is 2-connected, $G'$ contains no cut edge of $N_u$, thus $\ell(e) = 0$ for every edge $e$ of $G'$. If $u_i = v_i$ for all $i$, $(G', \ell|_{G'})$ is an $\mathcal{H}_5$ structure. If $u_1 \neq v_1$, by the same reasoning as in Case 1, $[u_1, v_1]_G$ can be chosen to contain no cut edge. By redefining $G' \cup [u_1, v_1]_G$ as $G'$, $(G', \ell|_{G'})$ is an $\mathcal{H}_5$ structure. This applies for any number of $u_i \neq v_i$. 
Therefore, $(N_u, \ell)$ contains an $\mathcal{H}_5$ structure.

  \item \textbf{Case 4:} $G \approx K_5$ and $\deg_G(t) = 2$.
  For $G \approx K_5$ and $\deg_G(t) = 2$, $G-t$ is homeomorphic to a graph obtained from $K_5^{(-e)} (\simeq H_6)$ by contracting zero or more pendant edges.
  Similarly, $G-t \subseteq N_u$, and $G-t$ contains two distinct cut vertices $v_1, v_2$ of $N_u$ with $\ell(v_i) = 1$ for $i \in [1,2]$.
  Let $\{u_1, u_2, w_1, w_2, w_3\} = \{ v \in V(G) \mid \deg_G(v) = 4 \}$. Then $G$ is the union of a graph $G'$ homeomorphic to $K_4^{-}$ and two internally vertex-disjoint $u_i$-$t$ paths $[u_1, t]_G, [u_2, t]_G$, with each $v_i$ satisfying $[v_i, t]_G \subseteq [u_i, t]_G$.
   Since $K_4^{-}$ is 2-connected, $G'$ contains no cut edge of $N_u$, thus $\ell(e) = 0$ for any edge $e$ of $G'$. If $u_i = v_i$, $(G', \ell|_{G'})$ is an $\mathcal{H}_6$ structure. If $u_1 \neq v_1$,by the same reasoning as in Case 1, $[u_1, v_1]_G$ can be assumed to contain no cut edge. By redefining $G' \cup [u_1, v_1]_G$ as $G'$, $(G', \ell|_{G'})$ is an $\mathcal{H}_6$ structure. This applies for any number of $u_i \neq v_i$. 
  Therefore, $(N_u, \ell)$ contains an $\mathcal{H}_6$ structure.
\end{itemize}

Therefore, if $N$ is not terminal planar, then in any of Cases 1--4, $L(N_u)$ contains some $\mathcal{H}_i$ structure ($i \in [1,6]$).
\end{proof}

\subsection{Main Results}\label{sec:main}
Combining Propositions \ref{prop:part1} and \ref{prop:part2}, we obtain the following result, which is illustrated in Figure \ref{fig:main.theorem.demo}.
\begin{theorem}\label{thm:main}
Let $N$ be a directed phylogenetic network. Then, $N$ is terminal planar if and only if, for any $i \in [1,6]$, the cut-labeled graph $L(N_u)$ of $N$ contains no $\mathcal{H}_i$ structure.
\end{theorem}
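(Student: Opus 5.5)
The theorem is the conjunction of the two directions already established, so the plan is to assemble Propositions~\ref{prop:part1} and~\ref{prop:part2} and check that their hypotheses cover every directed phylogenetic network $N$.

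For the forward implication, assume $N$ is terminal planar. Proposition~\ref{prop:part1} then gives directly that $L(N_u)$ contains no $\mathcal{H}_i$ structure for any $i\in[1,6]$. Its proof works with a \emph{minimal} $\mathcal{H}_i$ structure, so I would note that any $\mathcal{H}_i$ structure contains a minimal one. This holds because $F$ is finite: among all label-preserving subgraphs of $(N_u,\ell)$ contained in $F$ that are $\mathcal{H}_i$ structures, pick one with the fewest edges and vertices.

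For the converse I argue by contraposition. Suppose $N$ is not terminal planar. Proposition~\ref{prop:part2} yields some $i\in[1,6]$ for which $L(N_u)$ contains an $\mathcal{H}_i$ structure, which contradicts the hypothesis. That proposition treats two cases: when $N_u$ itself is nonplanar it gives $\mathcal{H}_1$ or $\mathcal{H}_4$ via Theorem~\ref{thm:kuratowski}, and when $N_u$ is planar it gives $\mathcal{H}_2,\mathcal{H}_3,\mathcal{H}_5,\mathcal{H}_6$ via Theorem~\ref{lem:N_completion} and Proposition~\ref{prop:relation_completion}. So no further case analysis is needed.

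The only step that could need care is the degenerate setting. Lemma~\ref{lem:disjoint_path}, used inside Proposition~\ref{prop:part1}, assumes $N$ has at least two edges. If $N$ has a single edge $(s,t_1)$, then $N$ is trivially terminal planar, and $L(N_u)$ is a single edge, which is too small to contain any $\mathcal{H}_i$ structure. So the equivalence holds there as well. With this small case checked, the theorem follows immediately.
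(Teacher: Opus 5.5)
Your proposal is correct and matches the paper: Theorem~\ref{thm:main} is obtained there simply by combining Proposition~\ref{prop:part1} for necessity with Proposition~\ref{prop:part2}, applied by contraposition, for sufficiency. Your two extra remarks are valid checks that the paper leaves implicit: every $\mathcal{H}_i$ structure contains a minimal one by finiteness, and the one-arc network satisfies the statement trivially, so the two-edge hypothesis of Lemma~\ref{lem:disjoint_path} causes no loss.
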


\begin{figure}[htbp]
\centering
\includegraphics[width=0.65\textwidth]{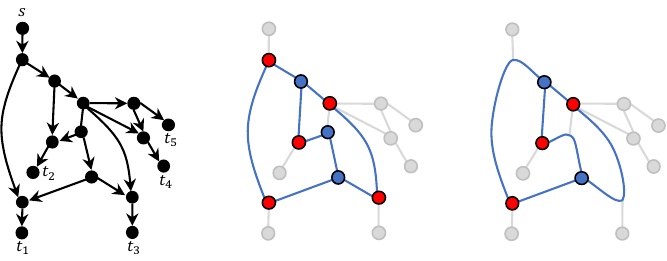}
\caption{An example of non-terminal planar networks. In this case,  the network contains an $\mathcal{H}_2$ structure. 
\label{fig:main.theorem.demo}
}
\end{figure}

Note that although Theorem~\ref{thm:main} concerns a directed phylogenetic network $N$, both Proposition~\ref{prop:part1} and Proposition~\ref{prop:part2} focus only on the cut-labeled graph of the underlying graph $N_u$ of $N$. Therefore, if an undirected phylogenetic network $G$ is isomorphic to the underlying graph $N_u$ of a directed phylogenetic network $N$, then Theorem~\ref{thm:main} applies equally to the cut-labeled graph of $G$ as well. This yields Corollary~\ref{cor:main_underlying}.

\begin{corollary}\label{cor:main_underlying}
Let $G$ be an undirected graph that is isomorphic to the underlying graph of a directed phylogenetic network. Then, $G$ is terminal planar if and only if, for any $i \in [1,6]$, the cut-labeled graph $L(G)$ contains no $\mathcal{H}_i$ structure.
\end{corollary}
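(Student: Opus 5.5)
The plan is to reduce Corollary~\ref{cor:main_underlying} to Theorem~\ref{thm:main} by transporting terminal planarity across the isomorphism $G\simeq N_u$. Fix a directed phylogenetic network $N$ with source $s$ and sink set $\{t_1,\dots,t_n\}$, and an isomorphism $\phi\colon V(N_u)\to V(G)$. The cut-labeled graph is defined purely from the undirected structure, since cut vertices and cut edges are isomorphism invariants. Hence $\phi$ induces a label-preserving isomorphism $L(N_u)\cong L(G)$. Containing an $\mathcal{H}_i$ structure is invariant under label-preserving isomorphism, because label-preserving subgraphs and label-preserving homeomorphisms are carried along by $\phi$. So $L(G)$ contains an $\mathcal{H}_i$ structure if and only if $L(N_u)$ does.

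Next I would check that $G$ is terminal planar (as an undirected phylogenetic network, per Definition~\ref{dfn:terminal}) if and only if $N$ is terminal planar.
\begin{enumerate}
\item The terminals of $G$ are its degree-one vertices. In $N_u$ these are exactly $s,t_1,\dots,t_n$: by Definition~\ref{dfn:phylo} these have total degree one, and every other vertex of $N$ has in-degree at least one (it is not a source) and out-degree at least one (it is not a sink), so it has degree at least two in $N_u$.
\item Hence a planar drawing of $N_u$ with all degree-one vertices on the external face is precisely a planar drawing of $N$ with the root and all leaves on the external face.
\item Drawings of $G$ and of $N_u$ correspond via $\phi$.
\end{enumerate}
Thus $G$ is terminal planar if and only if $N$ is.

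Combining the two steps with Theorem~\ref{thm:main} gives the chain
\[
G \text{ terminal planar} \iff N \text{ terminal planar} \iff L(N_u)\ \text{contains no } \mathcal{H}_i \text{ structure} \iff L(G)\ \text{contains no } \mathcal{H}_i \text{ structure}.
\]

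The only point needing care is the degree bookkeeping in the second step. Definition~\ref{dfn:phylo} requires $N$ to be simple and acyclic, but its underlying graph could still have a doubled edge if both arcs $(u,v)$ and $(v,u)$ were present. That would create a directed cycle, which acyclicity forbids, so degrees are preserved when passing to $N_u$. The remainder is routine.
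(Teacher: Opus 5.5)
Your proof is correct and follows the paper's route: Theorem~\ref{thm:main} depends only on $L(N_u)$, so it transfers to $G$ through the isomorphism. You also make explicit a step the paper leaves implicit, namely that the degree-one vertices of $N_u$ are exactly the root and the leaves (using acyclicity to rule out $2$-cycles), so terminal planarity of $G$ coincides with that of $N$.
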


An undirected phylogenetic network $G$ is called \emph{binary} if every non-leaf vertex $v$ of $G$ satisfies $\deg_G(v) \in \{2,3\}$. Similarly, a directed phylogenetic network $N$ is called \emph{binary} if every vertex $v$ of $N$ that is neither a root nor a leaf satisfies $\left(\mathrm{indeg}_N(v), \mathrm{outdeg}_N(v)\right) \in \{ (1,1), (2,1), (1,2) \}$.

By definition, the degree of each vertex in a binary phylogenetic network is at most $3$. On the other hand, for any $i \in [4,6]$, any graph that contains an $\mathcal{H}_i$ structure must have a vertex of degree at least $4$, so a cut-labeled graph of a binary phylogenetic network cannot contain an $\mathcal{H}_4$, $\mathcal{H}_5$, or $\mathcal{H}_6$ structure. It then follows from Theorem~\ref{thm:main} that we obtain Corollary~\ref{cor:main_binary} and Corollary~\ref{cor:main_binary_underlying}.

\begin{corollary}\label{cor:main_binary}
Let $N$ be a binary directed phylogenetic network. Then, $N$ is terminal planar if and only if the cut-labeled graph $L(N_u)$ of $N$ contains no $\mathcal{H}_1$, $\mathcal{H}_2$, or $\mathcal{H}_3$ structure.
\end{corollary}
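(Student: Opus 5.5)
The plan is to derive Corollary~\ref{cor:main_binary} directly from Theorem~\ref{thm:main}. The only additional fact needed is that, for a binary network, the structures $\mathcal{H}_4$, $\mathcal{H}_5$ and $\mathcal{H}_6$ cannot occur in $L(N_u)$. Once this is shown, the condition ``no $\mathcal{H}_i$ structure for any $i\in[1,6]$'' in Theorem~\ref{thm:main} reduces to ``no $\mathcal{H}_1$, $\mathcal{H}_2$ or $\mathcal{H}_3$ structure''. Both directions of the equivalence then follow at once.

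The first step is a degree bound in $N_u$. Let $N$ be binary. The root and the leaves have degree $1$ in $N_u$. Every other vertex has $(\mathrm{indeg},\mathrm{outdeg})\in\{(1,1),(2,1),(1,2)\}$. Since $N$ is simple and acyclic, the arcs at a vertex go to distinct neighbours, so each such vertex has degree at most $3$ in $N_u$. Hence $\max_v \deg_{N_u}(v)\le 3$.

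The second step is to show that every $\mathcal{H}_i$ graph with $i\in\{4,5,6\}$ has a vertex of degree at least $4$, and that this property passes to any $\mathcal{H}_i$ structure.
\begin{itemize}
\item For $H_4\simeq K_5$, every vertex has degree $4$.
\item For $H_5\simeq K_5^{(-v)}$, the four vertices of the remaining $K_4$ each have degree $4$: three $K_4$ edges plus one pendant edge. Contracting that pendant edge keeps degree $3+1-1+\dots$, so the merged vertex still has degree at least $3$. The cleaner observation is that the $K_4$ vertices whose pendant edge is \emph{not} contracted keep degree $4$. If all four pendant edges are contracted, the result is $K_4$, and I would need to check this case separately (see below).
\item For $H_6\simeq K_5^{(-e)}$, the three vertices of $K_5$ not incident to the deleted edge keep degree $4$, and pendant contractions never touch them.
\end{itemize}
So, except possibly for $\mathcal{H}_5$ with all pendants contracted, some vertex of degree $4$ persists. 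Label-preserving subdivision and smoothing only change degree-$2$ vertices, so the structure $F$ also has a vertex of degree $\ge 4$. Since $F\subseteq N_u$, this contradicts the degree bound.

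The main obstacle is the fully contracted $\mathcal{H}_5$ case, whose underlying graph is $K_4$ with all four vertices labeled $1$. For this case I would argue from the labels instead. Each vertex of the $K_4$ structure must be a cut vertex of $N_u$ and has degree $3$ inside $F$. Being a cut vertex, it needs a further neighbour outside the block containing $F$, which is a block by Proposition~\ref{prop:F}. That gives degree $\ge 4$ in $N_u$, again contradicting the bound. The same reasoning covers any vertex of label $1$ that already has degree $3$ in $F$, which makes the argument uniform. Note that in the fully contracted $\mathcal{H}_5$ case, $H_6$'s degree-$4$ vertices are untouched, so only $\mathcal{H}_5$ needs this label-based step.
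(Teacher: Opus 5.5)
Your proof is correct and follows the paper's route. The degree bound $\deg_{N_u}(v)\le 3$ for binary networks rules out $\mathcal{H}_4$, $\mathcal{H}_5$ and $\mathcal{H}_6$ structures, and Theorem~\ref{thm:main} then gives the equivalence. Your separate treatment of the fully contracted $\mathcal{H}_5$ graph ($K_4$ with all vertices labeled $1$, maximum degree $3$) is needed: the paper's claim that any graph containing an $\mathcal{H}_i$ structure with $i\in[4,6]$ has a vertex of degree at least $4$ holds only because label-$1$ vertices of $L(N_u)$ are cut vertices, exactly as you argue. Citing Proposition~\ref{prop:F} requires a minimal structure, but you can skip it, since a subdivided $K_4$ is $2$-connected and so lies in a single block.
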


\begin{corollary}\label{cor:main_binary_underlying}
Let $G$ be an undirected graph that is isomorphic to the underlying graph of a binary directed phylogenetic network. Then, $G$ is terminal planar if and only if the cut-labeled graph $L(G)$ contains no  $\mathcal{H}_1$, $\mathcal{H}_2$, or $\mathcal{H}_3$ structure.
\end{corollary}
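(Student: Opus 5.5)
The statement to prove is Corollary~\ref{cor:main_binary_underlying}. I will reduce it to Corollary~\ref{cor:main_underlying} (equivalently, to Theorem~\ref{thm:main} applied to an orientation of $G$). Then I will show that, for binary networks, the $\mathcal{H}_4$, $\mathcal{H}_5$, $\mathcal{H}_6$ conditions are vacuous.

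First I fix a binary directed phylogenetic network $N$ with $G \simeq N_u$. In $N$ the root and leaves have total degree $1$, and every other vertex has $(\mathrm{indeg},\mathrm{outdeg}) \in \{(1,1),(2,1),(1,2)\}$. So every vertex of $N_u$ has degree at most $3$, and hence $\deg_G(v) \le 3$ for all $v \in V(G)$. Taking underlying graphs identifies antiparallel arcs, which could only lower degrees, though acyclicity rules this out anyway. By Corollary~\ref{cor:main_underlying}, $G$ is terminal planar if and only if $L(G)$ contains no $\mathcal{H}_i$ structure for every $i \in [1,6]$. It therefore suffices to show that $L(G)$ never contains an $\mathcal{H}_i$ structure for $i \in \{4,5,6\}$.

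The key step is the degree observation. Suppose $(F,\ell|_F)$ is an $\mathcal{H}_i$ structure of $L(G)$ with $i \in \{4,5,6\}$, label-preserving homeomorphic to an $\mathcal{H}_i$ graph $(H,h)$. By Proposition~\ref{prop.map.phi}, $H$ is obtained from $F$ by smoothing degree-$2$ vertices only, so the image $\psi(v)$ satisfies $\deg_F(\psi(v)) = \deg_H(v)$ for every $v$ of degree different from $2$. Now $H$ comes from $H_i \in \{K_5, K_5^{(-v)}, K_5^{(-e)}\}$ by contracting pendant edges. In each case at least one vertex of $K_5$ keeps all four of its incident edges:
\begin{itemize}
\item for $K_5$, every vertex does;
\item for $K_5^{(-v)}$, the four surviving branch vertices each retain degree $4$ (three edges to each other plus one pendant path);
\item for $K_5^{(-e)}$, the three branch vertices not incident to the removed edge keep degree $4$.
\end{itemize}
Contracting a pendant edge merges a leaf into its neighbour and does not reduce that neighbour's degree below its degree in the $K_5$-part. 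Hence $H$ has a vertex of degree $4$, so $F \subseteq G$ has a vertex of degree at least $4$, which contradicts $\Delta(G) \le 3$.

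The only delicate point is verifying that each $\mathcal{H}_i$ graph with $i \ge 4$ really retains a degree-$4$ vertex after pendant contractions. This requires checking that the neighbour of a contracted pendant leaf keeps degree $\ge 4$: it is a branch vertex of $K_5$, which has three further branch neighbours plus the merged pendant path. Once this is checked, the corollary follows immediately, as does the analogous directed statement Corollary~\ref{cor:main_binary}.
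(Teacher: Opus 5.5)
Your route is the paper's: vertices of a binary network have degree at most $3$, an $\mathcal{H}_i$ structure with $i\ge 4$ forces a vertex of degree at least $4$, and then Corollary~\ref{cor:main_underlying} applies. However, your justification of the key step fails in one case. Contracting all four pendant edges of $(H_5,h_5)$ gives an $\mathcal{H}_5$ graph that is $K_4$ with every vertex labelled $1$. Its maximum degree is $3$, so the claim ``$H$ has a vertex of degree $4$'' is false for it. A subdivision of $K_4$ can certainly sit inside a graph of maximum degree $3$.

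The error is in your final check. Contracting a pendant edge $\{u,p\}$ deletes that edge, so the merged vertex is adjacent only to the three other branch vertices and has degree $3$. The ``merged pendant path'' contributes no edge. Your argument does work for $\mathcal{H}_4$, for $\mathcal{H}_6$ (the three branch vertices not incident to the deleted edge keep degree $4$), and for $\mathcal{H}_5$ with at most three contractions.

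The missing idea is that in the remaining case the fourth edge must come from the cut-labelling, not from $F$ itself.
\begin{itemize}
\item Let $x=\psi(v)$ be a branch vertex of the $K_4$-subdivision $F$. By Proposition~\ref{prop.map.phi}, $\ell(x)=h(v)=1$, so $x$ is a cut vertex of $G$.
\item Since $F-x$ is connected, it lies in a single component of $G-x$.
\item Because $G$ is connected and $G-x$ is not, $x$ has a neighbour in some other component of $G-x$, reached by an edge outside $F$.
\item Hence $\deg_G(x)\ge \deg_F(x)+1\ge 4$, contradicting maximum degree at most $3$.
\end{itemize}
This is exactly what the paper's one-line claim (``any graph that contains an $\mathcal{H}_i$ structure must have a vertex of degree at least $4$'') silently relies on. The claim holds for cut-labelled graphs, but not for arbitrary $0/1$-labelled graphs.

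A minor point: smoothing in this paper may merge parallel edges, so you only get $\deg_F(\psi(v))\ge\deg_H(v)$, not equality. The inequality is all you need.
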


Although we have obtained the forbidden subgraphs for terminal planar networks, we can describe more efficient, linear-time algorithms for terminal planarity testing and drawing without checking the existence of these obstructions. 
Given a directed phylogenetic network $N$---or its underlying undirected graph $N_u$---one can test its terminal planarity in linear time simply by checking the planarity of its $st$-completion $N^\ast$ by 
Theorem~\ref{lem:N_completion} (for linear-time planarity testing, see \cite{BOOTH1976335,Boyer_Myrvold_2004, HopcroftTarjan1974Planarity}). Moreover, a terminal planar drawing of $N$ can be obtained in linear time by computing a planar embedding of this $st$-digraph $N^\ast$ (using e.g.\ \cite{DIBATTISTA1988175,DGR1989Area}) and then removing the unique sink $t$ and its incident edges from the embedding. Our algorithms provide a distinct approach from the linear-time algorithms in \cite{WuMoulton9804871} that rely on testing  the upward planarity after $t$-completion $N^+$. Upward planarity can be determined in linear time for  single-source digraphs (\cite{bbmt-oupsst-1998} and~\cite[Theorem~15]{garg1995upward}) while being NP-complete for multi-source digraphs \cite{GargTamassia2001UpwardNP}.

\section{Application}\label{sec:application}
So far, we have obtained the forbidden subgraphs of terminal planar phylogenetic networks in both directed and undirected settings (Theorem \ref{thm:main} and Corollary \ref{cor:main_underlying}). In this section, we will show a quick application of Corollary \ref{cor:main_underlying}.  To be more specific, we will \upd{present} Theorem \ref{thm:general} \upd{which gives} a characterization of undirected graphs admitting a planar drawing in which all specified vertices lie on the outer face (e.g.\ Figure \ref{fig:ex_outerdrawable_graph}).
The key idea behind this result is the operation of transforming undirected planar graphs into  undirected planar phylogenetic networks, as is illustrated in Figure \ref{fig:ex_outerdrawable_graph}. 

\begin{figure}[htbp]
\centering
\includegraphics[width=1\textwidth]{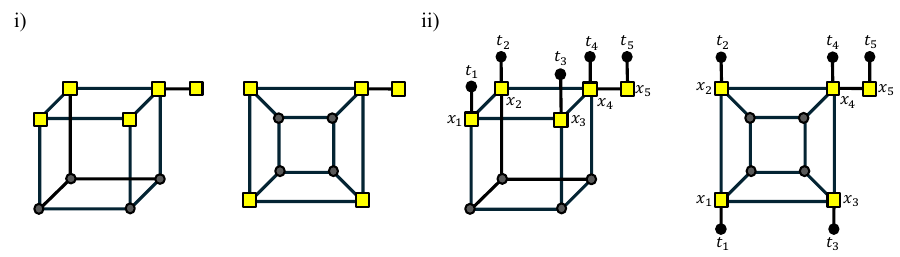}
\caption{i) An undirected graph $G$ with $V_o \subseteq V(G)$ such that there exists a planar drawing of $G$ where all vertices in the prescribed subset $V_o$ (highlighted in yellow) lie on the outer face. 
ii) An undirected phylogenetic network $\tilde{G}$ constructed from $G$ by adding a pendant edge $(x, t)$ for each $x\in V_o$, and a terminal planar drawing of $\tilde{G}$.  
\label{fig:ex_outerdrawable_graph}
}
\end{figure}

From now on, let $G$ be a planar connected undirected  graph, and let $V_o \subseteq V(G)$ be the set of vertices that we wish to place on the outer face in a planar drawing of $G$. Without loss of generality, we may assume that for every cut vertex $v$ of $G$, each connected component of $G - v$ contains at least one element of $V_o$. Indeed, when a connected component $C$ of $G - v$ has no element of $V_o$, we may choose any planar drawing of $C$ in finding a desirable planar drawing of $G$. 

In the proof of Theorem~\ref{thm:general}, we will use Theorem~\ref{thm:st-orientation}, which is a  restatement of classic results in \cite{lempel1967algorithm} (Theorem~a.1 and Lemma~a.1 in \cite{lempel1967algorithm}; see also Theorem 4.1 in \cite{DEFRAYSSEIX1995157}). Briefly, Lempel et al.\ \cite{lempel1967algorithm} showed that there exists such an acyclic orientation of $G$ that is described in Theorem \ref{thm:st-orientation} if and only if there exists a function called an $st$-numbering of $G$. For details, see \cite{lempel1967algorithm, DEFRAYSSEIX1995157}.

\begin{theorem}\label{thm:st-orientation}
Let $G$ be a connected undirected graph, and let $\{s,t\}$ be an edge of $G$. Then $G$ can be oriented to an acyclic digraph with a unique source $s$ and a unique sink $t$ if and only if $G$ is biconnected.
\end{theorem}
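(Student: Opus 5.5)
The plan is to prove the two directions separately. Necessity uses a short path-following argument. Sufficiency builds the orientation incrementally along an open ear decomposition. The degenerate case $G\simeq K_2$, consisting of the single edge $\{s,t\}$, is trivially biconnected and trivially orientable, so I assume $|V(G)|\ge 3$ throughout.

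\emph{Necessity.} Suppose $G$ has an acyclic orientation $\vec G$ with unique source $s$ and unique sink $t$, and suppose for contradiction that $v$ is a cut vertex of $G$.
\begin{enumerate}
\item Since $\{s,t\}\in E(G)$, the vertices $s$ and $t$ that differ from $v$ all lie in a single component of $G-v$. As $G-v$ has at least two components, some component $C$ contains neither $s$ nor $t$.
\item Pick any $w\in V(C)$. Every vertex of $C$ is neither a source nor a sink, so it has at least one in-arc and at least one out-arc.
\item Follow out-arcs from $w$. By acyclicity and finiteness the walk cannot stay in $C$ forever, and the only exit from $C$ is through $v$. This gives a directed path from $w$ to $v$ whose internal vertices lie in $C$.
\item Symmetrically, follow in-arcs backwards from $w$. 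This gives a directed path from $v$ to $w$ whose internal vertices lie in $C$.
\item Concatenating the two paths yields a closed directed walk through $v$. Its length is at least $2$, so it contains a directed cycle, contradicting acyclicity.
\end{enumerate}
Hence $G$ has no cut vertex, i.e.\ $G$ is biconnected.

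\emph{Sufficiency.} Assume $G$ is biconnected. By Whitney's theorem on open ear decompositions, $G$ has an ear decomposition $C_0,E_1,\dots,E_m$ with the following properties. The first piece $C_0$ is a cycle containing the edge $\{s,t\}$; such a cycle exists because in a biconnected graph every edge lies on a cycle. Each $E_j$ is a path whose two distinct endpoints lie in $G_{j-1}:=C_0\cup E_1\cup\dots\cup E_{j-1}$ and whose interior is disjoint from $G_{j-1}$. Single-edge ears are allowed.
\begin{enumerate}
\item Orient $C_0$ as the arc $(s,t)$ together with the directed path from $s$ to $t$ along $C_0-\{s,t\}$. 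This digraph is acyclic, has unique source $s$ and unique sink $t$, and contains the arc $(s,t)$.
\item Inductively, suppose $G_{j-1}$ has been oriented with these properties. Fix a topological order of $G_{j-1}$.
\item Let $a,b$ be the endpoints of $E_j$, labelled so that $a$ precedes $b$ in that order. Orient $E_j$ as a directed path from $a$ to $b$.
\item Each new internal vertex has in-degree and out-degree $1$, and $a,b$ only gain arcs, so no new source or sink appears. Hence $s$ remains the unique source and $t$ the unique sink.
\item Any directed cycle would have to use the whole new ear, because its internal vertices have in- and out-degree $1$. The rest of the cycle would then be a directed $b$-to-$a$ path in $G_{j-1}$, which contradicts $a$ preceding $b$ in the topological order. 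So the new digraph is acyclic.
\end{enumerate}
After the last ear, $G_m=G$ has the required orientation.

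The main obstacle is the existence of an open ear decomposition starting from a cycle through the prescribed edge $\{s,t\}$. I would cite Whitney's theorem directly. If a self-contained argument is needed, grow $G_j$ greedily. While $G_j\ne G$, biconnectivity lets us pick either an edge of $G$ outside $G_j$ with both ends in $G_j$, or a vertex outside $G_j$ adjacent to $G_j$. In the second case, Menger's theorem (two internally disjoint paths to $G_j$) supplies a path leaving $G_j$ and returning at a different vertex.
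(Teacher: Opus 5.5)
Your proof is correct, and it takes a different route from the paper. The paper does not prove this theorem at all. It quotes it from Lempel--Even--Cederbaum (and de Fraysseix et al.) in the language of $st$-numberings: bijections $V(G)\to\{1,\dots,|V(G)|\}$ with $s\mapsto 1$ and $t\mapsto |V(G)|$, in which every other vertex has both a smaller-numbered and a larger-numbered neighbour. Orienting each edge from the smaller to the larger number turns an $st$-numbering into the required orientation. Conversely, any topological order of such an orientation is an $st$-numbering. So in the paper the statement is just a translation of the classical existence theorem for $st$-numberings of biconnected graphs.

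You instead work directly with orientations:
\begin{itemize}
\item Necessity uses your ``trapped component'' argument: a component of $G-v$ containing neither $s$ nor $t$ forces directed paths $w\leadsto v$ and $v\leadsto w$, hence a directed cycle.
\item Sufficiency orients the ears of an open ear decomposition forward along a topological order of the part already built. This is the familiar ear-decomposition argument, phrased for orientations rather than numberings. An $st$-numbering is recovered as any topological order of the final digraph.
\end{itemize}
The citation buys brevity and a direct link to the linear-time $st$-numbering algorithms. Your version is self-contained and elementary, modulo Whitney's ear theorem, whose greedy proof you sketch correctly. There, connectivity of $G-u$ already gives the returning path, so Menger is not needed. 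Your argument also shows transparently why biconnectivity is exactly the right hypothesis.

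One small point to tighten in step~4 of the sufficiency part. ``Only gaining arcs'' could destroy the source status of $s$ if $b=s$, or the sink status of $t$ if $a=t$, so you must rule these out. Since $s$ is the unique source of $G_{j-1}$, it is the first vertex of every topological order. Likewise $t$ is the last, so $a$ preceding $b$ forces $b\neq s$ and $a\neq t$. Alternatively, once step~5 gives acyclicity, the new digraph has at least one source and one sink. Since none are created, these can only be $s$ and $t$.
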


\begin{theorem}\label{thm:general}
Let $G$ be a planar connected undirected graph and let $V_o \subseteq V(G)$ such that for every cut vertex $v$ of $G$, each connected component of $G - v$ contains at least one vertex in $V_o$. Let $\omega: V(G) \cup E(G) \rightarrow \{0, 1\}$ be the labeling function defined by equation \eqref{eq:out}:
\begin{equation}\label{eq:out}
\omega(x) := 
\begin{cases}
1 & \text{if $x$ is a cut vertex or cut edge of $G$, or $x \in V_o$,} \\
0 & \text{otherwise.}
\end{cases}
\end{equation}
Then, $G$ admits a planar drawing in which all vertices in $V_o$ lie on the outer face if and only if the $0/1$-labeled graph $(G, \omega)$ contains no $\mathcal{H}_i$ structure for any $i \in [1,6]$.
\end{theorem}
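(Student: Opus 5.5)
We reduce Theorem~\ref{thm:general} to Corollary~\ref{cor:main_underlying} through the construction of Figure~\ref{fig:ex_outerdrawable_graph}. Let $\tilde G$ be obtained from $G$ by attaching, for each $x\in V_o$, a new pendant vertex $t_x$ with the edge $\{x,t_x\}$. Then $\tilde G$ is an undirected phylogenetic network whose leaves are exactly the $t_x$.

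First, the drawing condition transfers. Given a planar drawing of $G$ with $V_o$ on the outer face, we attach each $t_x$ inside the outer face, which gives a terminal planar drawing of $\tilde G$. Conversely, given a terminal planar drawing of $\tilde G$, each $t_x$ lies on the outer face. Its neighbour $x$ lies on the boundary of the face containing $t_x$, so deleting the $t_x$ leaves every $x\in V_o$ on the outer face of the induced drawing of $G$.

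Second, the labelings agree. Every $x\in V_o$ is a cut vertex of $\tilde G$ (it separates $t_x$), and each new pendant edge is a cut edge. A cut vertex or cut edge of $G$ stays one in $\tilde G$, since attaching pendant edges neither merges components nor creates cycles. A vertex or edge that is not a cut of $G$ cannot become one in $\tilde G$, except that the vertices of $V_o$ do. Hence $L(\tilde G)$ restricted to $G$ coincides with $(G,\omega)$, and the elements of $\tilde G$ not in $G$ are exactly the pendant edges and the leaves $t_x$.

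Third, the forbidden structures correspond. Any $\mathcal{H}_i$ structure of $(G,\omega)$ is a label-preserving subgraph of $L(\tilde G)$. Conversely, an $\mathcal{H}_i$ structure $(F,\ell|_F)$ of $L(\tilde G)$ has all edges labelled $0$, as noted in the proof of Proposition~\ref{prop.map.phi}. Since pendant edges carry label $1$, $F$ contains no pendant edge, and so $F\subseteq G$ with the same labels.

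The main obstacle is applying Corollary~\ref{cor:main_underlying}, which requires $\tilde G$ to be isomorphic to the underlying graph of a directed phylogenetic network. We pick some $x_0\in V_o$ and designate $t_{x_0}$ as the root. We then need an acyclic orientation in which $t_{x_0}$ is the unique source and every other $t_x$ is a sink with in-degree $1$. We build it along the block-cut tree $T_{\tilde G}$ rooted at the block $\{x_0,t_{x_0}\}$.

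Each block $B$ other than a pendant edge has a unique entry vertex, namely the cut vertex joining $B$ to its parent in $T_{\tilde G}$. The hypothesis that every component of $G-v$ meets $V_o$ ensures that every leaf of $T_{\tilde G}$ is a pendant block $\{x,t_x\}$. Hence every non-pendant block $B$ has an exit cut vertex $w\neq$ entry, since $B$ has a child subtree ending in a leaf. It also handles the edge case of a block that is a single edge $\{u,w\}$ with no descendants, because such a block would be a leaf of $T_{\tilde G}$ and is therefore excluded.

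For a biconnected $B$ with at least three vertices, we add the edge $\{\text{entry},w\}$ if it is absent; the result is still biconnected. Theorem~\ref{thm:st-orientation} then gives an acyclic orientation of $B$ with unique source the entry vertex and unique sink $w$, and we delete the auxiliary edge afterwards. Removing it keeps the orientation acyclic and keeps the entry vertex as the unique source. Any new sink created by the removal lies inside $B$; it has an out-edge into a child block if it is a cut vertex, or it is an interior sink, which is allowed in a phylogenetic network. We should double-check that Definition~\ref{dfn:phylo} permits interior sinks; if not, we instead keep $w$ as the only sink by not deleting the edge, and argue using underlying graphs only.

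Bridges are oriented away from the root. Gluing these orientations along $T_{\tilde G}$ yields an acyclic digraph with unique source $t_{x_0}$ and sinks $t_x$ for $x\neq x_0$, so $\tilde G$ is the underlying graph of a directed phylogenetic network. Corollary~\ref{cor:main_underlying} together with the first three steps then completes the proof.
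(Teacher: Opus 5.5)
Your first three steps (attaching a leaf $t_x$ at each $x\in V_o$, transferring drawings, checking that $L(\tilde G)$ restricts to $(G,\omega)$, and ruling out label-$1$ pendant edges in any $\mathcal{H}_i$ structure) match the paper's proof, with somewhat more detail. The real difference is how you show that $\tilde G$ is the underlying graph of a directed phylogenetic network.

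The paper applies Theorem~\ref{thm:st-orientation} once, globally:
\begin{itemize}
\item it picks a leaf $s$ and adds one new vertex $t$ adjacent to all other leaves, together with the edge $\{s,t\}$;
\item it asserts that the resulting graph $G^\star$ is biconnected, which is where the hypothesis on the components of $G-v$ enters;
\item it takes an orientation with unique source $s$ and unique sink $t$ and restricts it to $\tilde G$.
\end{itemize}
Every leaf has degree $2$ in $G^\star$ and $t$ is the only sink. So the restriction has root $s$, and its sinks are exactly the remaining leaves, each of in-degree $1$.

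You instead orient block by block along the block-cut tree. You use the hypothesis to show that every leaf block is a pendant edge $\{x,t_x\}$, so every other block has an exit cut vertex. This is correct and makes the role of the hypothesis very explicit. It does cost bookkeeping that the paper avoids:
\begin{itemize}
\item acyclicity holds because a directed cycle lies in a single block;
\item the source is unique because every other vertex is a non-entry vertex of its parent block;
\item there are no interior sinks because every non-leaf vertex is either a non-cut vertex of a block with at least three vertices (so not that block's sink) or the entry of a child block.
\end{itemize}

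One remark in your orientation step is wrong and should be replaced. Definition~\ref{dfn:phylo} does \emph{not} allow interior sinks: the sinks are identified with the leaves and must have in/out-degree $(1,0)$. Your fallback of keeping the auxiliary edge also fails, since the underlying graph would then no longer be $\tilde G$.

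Neither is needed. Deleting the arc from the entry to $w$ can only turn the entry into a sink or $w$ into a source. Since $B$ is biconnected with at least three vertices, both have degree at least $2$ in $B$ itself, so the entry keeps an out-arc and $w$ keeps an in-arc. The deletion therefore creates no new source or sink, and your construction goes through as stated.

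Like the paper, you also tacitly need $|V_o|\ge 2$ so that $\tilde G$ has at least two leaves. When $|V_o|\le 1$ the hypothesis forces $G$ to have no cut vertex, and the statement is immediate.
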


\begin{proof}
For each vertex $x_i\in V_o$ of $G$, let $\tilde{G}$ be the graph obtained by adding a new vertex $t_i$ with $\deg_{\tilde{G}}(t_i)=1$ and an edge $\{x_i, t_i\}$. Let $T$ denote the set of added vertices $\{t_i\}$. By construction, $\tilde{G}$ is an undirected phylogenetic network with terminal set $T$. Moreover, $G$ admits a planar drawing in which all elements of $V_o$ lie on the outer face if and only if $\tilde{G}$ is terminal planar.

To apply Corollary~\ref{cor:main_underlying} to $\tilde{G}$, we show that $\tilde{G}$ is isomorphic to the underlying graph of some directed phylogenetic network. Let $s$ be an arbitrary vertex in $T$. Let $\hat{G}$ be the graph obtained from $\tilde{G}$ by adding a new vertex $t$ and edges $\{s_i, t\}$ for each $s_i\in T\setminus \{s\}$. Although $\hat{G}$ need not be biconnected, the graph $G^\star$ obtained by adding the edge $\{s, t\}$ to $\hat{G}$ is biconnected. By Theorem~\ref{thm:st-orientation}, there exists an acyclic orientation of $G^\star$ with $s$ as the unique source and $t$ as the unique sink.
Under such an orientation, the subgraph $\tilde{G}$ of $G^\star$ is also acyclic. Clearly, $\tilde{G}$ has a unique root $s$ and leaves connected to $t$.
Therefore, $\tilde{G}$ is the underlying graph of some directed phylogenetic network. Hence, by Corollary~\ref{cor:main_underlying}, $\tilde{G}$ is terminal planar if and only if $L(\tilde{G})$ contains no $\mathcal{H}_i$ structure for any $i\in[1,6]$.

It remains to show that the cut-labeled graph $L(\tilde{G})$ contains no $\mathcal{H}_i$ structure if and only if $(G, \omega)$ contains no $\mathcal{H}_i$ structure.
We claim that $(G, \omega)\subseteq L(\tilde{G})$. Indeed, by construction of $\tilde{G}$, each $x\in V_o$ is a cut vertex of $\tilde{G}$, so the label $\omega (x)$ of each element $x\in V(G)\cup E(G)$ is unchanged in $L(\tilde{G})$. In addition, each pendant edge $\{x_i,t_i\}$ in $L(\tilde{G})$ has label $1$ since it is a cut edge of $\tilde{G}$; however, for any $i\in[1,6]$, no $\mathcal{H}_i$ structure contains an edge with label $1$, so $L(\tilde{G})$ contains an $\mathcal{H}_i$ structure if and only if $(G,\omega)$ contains an $\mathcal{H}_i$ structure.
\end{proof}

\section{Conclusion and Open Problem}\label{sec:conclusion_open_problems}
\upd{We obtained two characterizations of terminal planar networks. The first one, which concerns directed networks, is based on the planarity of certain supergraphs (Theorem~\ref{lem:N_completion}). This characterization furnishes new linear-time algorithms for terminal planarity testing and drawing as described in Section~\ref{sec:main}. It also plays an important role in proving our main result, a Kuratowski-type characterization that is valid for both directed and undirected networks (Theorem \ref{thm:main} and Corollary \ref{cor:main_underlying}).} 
Building on the concept of cut-labeled graphs, we identified a finite family of forbidden subgraphs, denoted by the $\mathcal{H}_i$ structures, which are defined using the six graphs shown in Figure \ref{fig:forbidden}. 
As discussed in Section \ref{sec:application}, our main result readily applies to the more general problem of determining whether an undirected graph $G$ admits a planar drawing in which a specified set of vertices $V_o \subseteq V(G)$ all lie on the outer face.

It remains open whether one can obtain a Kuratowski-type theorem for upward planar networks, even in the single-source case.  It would be therefore interesting to try to reveal their forbidden subgraphs by weakening some of the $\mathcal{H}_i$ structures ($i\in[1,6]$) discussed in this paper. 
Since $\mathcal{H}_1$ and $\mathcal{H}_4$ ($K_{3,3}$ and $K_5$, respectively) are the forbidden structures of planar graphs, they are clearly forbidden subgraphs of upward planar networks. In contrast, it appears that $\mathcal{H}_2$ and $\mathcal{H}_5$ are not forbidden subgraphs of upward planar networks, as shown by the examples in Figure~\ref{fig:ex_H2_H5_upward}. We note that $\mathcal{H}_3$ and $\mathcal{H}_6$ may require more careful treatment; among single-source acyclic digraphs containing $\mathcal{H}_3$ or $\mathcal{H}_6$, some are upward planar while others are not, as illustrated in  Figure~\ref{fig:ex_H3_H6_notupward}.  

\begin{figure}[htbp]
\centering
\includegraphics[width=0.45\textwidth]{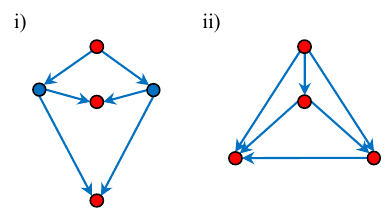}
\caption{Single-source upward planar networks that contain i) an $\mathcal{H}_2$ structure and ii) an $\mathcal{H}_5$ structure. 
\label{fig:ex_H2_H5_upward}
}
\end{figure}

\begin{figure}[htbp]
\centering
\includegraphics[width=0.9\textwidth]{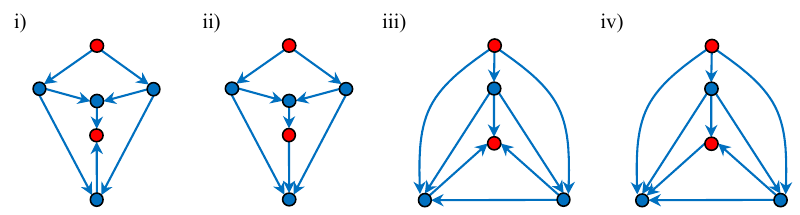}
\caption{i) A non-upward planar network containing an $\mathcal{H}_3$ structure, ii) an upward planar network containing an $\mathcal{H}_3$ structure, iii) a non-upward planar network containing an $\mathcal{H}_6$ structure, iv) an upward planar network containing an $\mathcal{H}_6$ structure. 
\label{fig:ex_H3_H6_notupward}
}
\end{figure}


\bibliographystyle{unsrt} 
\bibliography{planarity-manuscript-english.bib} 

\end{document}